\begin{document}

\setlength{\baselineskip}{18pt}

\title{An explicit extragradient algorithm for equilibrium problems on Hadamard manifolds
}


\author{Jingjing Fan$^*$  \and Bing Tan \and Songxiao Li
}


\institute{$^*$Corresponding author. \at
              Institute of Fundamental and Frontier Sciences, University of Electronic Science and Technology of China, Chengdu 611731, China\\
                       \email{fanjingjing0324@163.com} (J. Fan),{bingtan72@gmail.com} (B. Tan),   {jyulsx@163.com} (S. Li)
}

\date{Received: date / Accepted: date}

\maketitle

\begin{abstract}
In this paper, we investigate a new extragradient algorithm for solving pseudomonotone equilibrium problems on Hadamard manifolds.
The algorithm uses a variable stepsize which is updated at each iteration and based on some previous iterates.
The convergence analysis of the proposed algorithm is discussed under mild assumptions.
In the case where the equilibrium bifunction is strongly pseudomonotone, the $R$-linear rate of convergence of the new algorithm is formulated.
A fundamental experiment is provided to illustrate the numerical behavior of the algorithm.
The results presented in this paper generalize some corresponding known results.
\keywords{Equilibrium problem \and Hadamard manifold \and Extragradient algorithm \and Pseudomonotone bifunction \and Lipschitz-type bifunction}
\subclass{47H05 \and 47J25 \and 90C33 \and 91B50}
\end{abstract}

\section{Introduction}

Let $C$ be a nonempty  convex and closed subset of a real Hilbert space $H$.
Let $f: C \times C \rightarrow \mathbb{R}$ be a bifunction with $f (x, x) = 0$ for all $x \in C$.
Consider the problem involving $f$, which consists of finding $x^* \in  C$ such that
$
f(x^*,y) \geq 0,$ $ \forall y\in C.
$
The   problem, which  is also called the Ky Fan inequality, was introduced by Fan \cite{Fan1}  and further   developed by Blum and Oettli \cite{Blu1}. It is now known and called the equilibrium problem.
The set of all solutions of the equilibrium problem is denoted by $EP(f, C)$.
Many problems arising in  transportation, financial engineering, and medical imaging  can be reduced to finding solutions of the equilibrium problems
see, for example, \cite{Kon1,Fac1,Ius2} and the references therein.

Recently, many numerical algorithms have been proposed for solving equilibrium problems such as the proximal point algorithm \cite{Chen1},
the extragradient algorithm \cite{Qin2}, the subgradient algorithm \cite{Bur1}, and the gap function algorithm  \cite{Mas1}.

In 2019, Hieu, Quy and Vy \cite{Hie2} introduced an extragradient algorithm to solve a pseudomonotone equilibrium problem  with a Lipschitz-type condition
in $H$. The extragradient algorithm is as following. Given $x_0 \in  C$ and $\lambda_0 >0, \mu \in (0,1)$, compute $y_n$ and $x_{n+1}$ by
\[
\begin{aligned}
\left\{
     \begin{array}{ll}
        y_n=\arg\min_{y\in C} \left\{f(x_n,y)+\frac{1}{2\lambda_n} \|x_n-y\|^2 \right\} ,\\
        x_{n+1}=\arg\min_{y\in C} \left\{f(y_n,y)+\frac{1}{2\lambda_n} \|x_n-y\|^2 \right\},
        \end{array}
    \right.
\end{aligned}
\]
where
$$
\lambda_{n+1}=\min \left\{ \lambda_n, \frac{\mu(\|x_n-y_n\|^2+\|x_{n+1}-y_n\|^2)}{2[f(x_n,x_{n+1})-f(x_n,y_n)-f(y_n,x_{n+1})]_+} \right\}.
$$
They proved that   iterative scheme $\{x_n\}$ converges to some $x^* \in EP(f, C)$.

On the other hand,   in many practical applications, the natural structure of the data can be  modeled as constrained optimization problems,
where the constraints are non-linear and non-convex. More specially, the constraints are Riemannian manifolds, see, e.g., \cite{Bac1,Ber2}.
Many issues in nonlinear analysis such as fixed point problems, and variational inequalities have been magnified from linear settings to nonlinear systems because these problems cannot be posted in   linear spaces and require a manifold structure.
Therefore, the extensions of the concepts and techniques in    equilibrium problems and related topics from Euclidian spaces to Riemannian manifolds are natural,
and the generalizations of optimization methods from Euclidean spaces to Riemannian manifolds also have some more important advantages,
see, for example, \cite{Lib1,Ded3,Li4,ansari5}.

In 2012, Colao et al. \cite{Col1} on the  Riemannian setting first introduced the equilibrium problems, which consists of finding $x^* \in  C$ such that
\begin{equation*}\label{EP}
f\left(x^{*}, y\right) \geq 0, \quad \forall y \in C, \tag{\text{EP}}
\end{equation*}
where $C$ is a nonempty  convex and closed subset of Hadamard manifold $\mathcal{M}$, and
$f : C \times C\rightarrow \mathbb{R}$ is a bifunction with $f(x, x)=0$, for all $x \in C$.
We denote by $EP(f, C)$ the set solution of  problem \eqref{EP}.
Indeed, in recent years, various algorithms, which involves monotone bifunctions,
have been extended
 to solve equilibrium problems from Hilbert spaces to the more general setting of Riemannian manifolds.
In particular, Khammahawong et al. \cite{Kha1} presented an extragradient algorithm to solve strongly pseudomonotone equilibrium problems on Hadamard manifolds.
Their algorithm is described as follows. Given $x_0, y_0 \in C$,  compute $x_{n+1}$ and $y_{n+1}$ by
\[
\begin{aligned}
\left\{
     \begin{array}{ll}
        x_{n+1}=\arg\min_{y\in C} \left\{f(y_n,y)+\frac{1}{2\lambda_n}d^2(x_n,y) \right\} ,\\
        y_{n+1}=\arg\min_{y\in C} \left\{f(y_n,y)+\frac{1}{2\lambda_{n+1}}d^2(x_{n+1},y) \right\},
        \end{array}
    \right.
\end{aligned}
\]
where $d(x, y)$ is the geodesic distance between $x$ and $y$ in $\mathcal{M}$
and nonincreasing sequence $\{\lambda_n\}$ satisfying $\lim_{n\rightarrow \infty}\lambda_n=0$ and $\sum^{\infty}_{n=0}\lambda_n=+\infty$.
The convergence of sequence $\{x_n\}$ was investigated and obtained.

Inspired by the work in \cite{Hie2,Kha1}, the aim of this paper is to present an extragradient algorithm with new stepsize rules
for pseudomonotone equilibrium problems on Hadamard manifolds and   study its convergence properties.
Our algorithm uses a variable stepsize sequence, which is generated at each iteration, based on some previous iterates,
and without any linesearch procedure.
This leads to the main advantage of the algorithm, that is, the performance of its convergence is done without the prior
knowledge of the Lipschitz-type constants of bifunctions.
The convergence of the resulting algorithm is established under suitable conditions.
In the case that  the bifunction is strongly pseudomonotone, the $R$-linear rate of the convergence of the algorithm is also  proved.

The rest of this paper is organized as follows.
In Section 2, we present some basic definitions and fundamental results from manifolds which will be needed in the sequel.
In Section 3, we propose the new extragradient algorithm involving pseudomonotone bifunctions and analyze its convergence
on Hadamard manifolds.
In Section 4, we study the convergence rate of the proposed algorithm.
In Section 5, we give numerical experiments to illustrate the computational performance on a test problem.
Finally, Section 6, the last section,  concludes the paper with a brief summary.

\section{Preliminaries}

In this section, we recall some fundamental definitions, properties, and notations concerned with the Riemannian geometry.
These basic facts can be found, for example, in  \cite{Fer3,Lis1,Led1}.

Let $\mathcal{M}$ be a finite dimensional differentiable manifold. The set of all tangents at $x \in \mathcal{M}$ is called a tangent space of $\mathcal{M}$ at $x \in \mathcal{M}$, which forms a vector space of the same dimension as $\mathcal{M}$. And we denote it by $T_x\mathcal{M}$. The tangent bundle of $\mathcal{M}$ is denoted by $T\mathcal{M}=\bigcup_{x\in \mathcal{M}}T_x\mathcal{M}$, which is naturally a manifold. We denote by $\langle \cdot,\cdot \rangle_x$ the scalar product on $T_x\mathcal{M}$ with the associated norm $\|\cdot\|_x$, where the subscript $x$ is sometimes omitted. A differentiable manifold $\mathcal{M}$ with a Riemannian metric $\langle \cdot,\cdot\rangle$ is called a Riemannian manifold.
Letting $\gamma: [a,b] \rightarrow \mathcal{M}$ be a piecewise differentiable curve joining $x = \gamma(a)$ to $y = \gamma(b)$ in $\mathcal{M}$, we can define the length of $L(\gamma) =\int^b_a\|\gamma'(t)\|dt$. The minimal length of all such curves joining $x$ to $y$ is called the Riemannian distance and it is denoted by $d(x, y)$.

Let $\nabla$ be the Levi-Civita connection associated with the Riemannian metric. Let $\gamma$ be a smooth curve in $\mathcal{M}$. A vector field $X$ is said to be parallel along $\gamma$ iff $\nabla_{\gamma'}X=0$. If $\gamma'$ is parallel along $\gamma$, i.e., $\nabla_{\gamma'}\gamma' = 0$, then $\gamma$ is said to be geodesic. In this case, $\|\gamma'\|$ is a constant. Furthermore, if $\|\gamma'\|=1$, then $\gamma$ is called normalized.
A geodesic joining $x$ to $y$ in $\mathcal{M}$ is said to be minimal if its length equals $d(x, y)$. Let $\gamma: \mathbb{R} \rightarrow \mathcal{M}$ be a geodesic and $P_\gamma[.,.]$ denote the parallel transport along $\gamma$ with respect to $V$, which is defined by $P_{\gamma[\gamma(a),\gamma(b)]}(v) = V(\gamma(b))$ for all $a, b \in \mathbb{R}$ and $v \in T_{\gamma(a)}\mathcal{M}$, where $V$ is the unique
vector field satisfying $\nabla_{\gamma'(t)}V=0$ and $V(\gamma(a))=v$. Then, for any $a,b \in \mathbb{R}$, $P_{\gamma,[\gamma(b),\gamma(a)]}$ is an isometry from $T_{\gamma(a)}\mathcal{M}$ to $T_{\gamma(b)}\mathcal{M}$. We will write $P_{y,x}$ instead of $P_{\gamma,[y,x]}$ in the case where $\gamma$ is a minimal geodesic joining $x$ to $y$ if this will avoid any confusion.

A Riemannian manifold is complete if, for any $x \in \mathcal{M}$, all geodesics emanating from $x$ are defined for all $-\infty<t<+\infty$. By the Hopf-Rinow Theorem \cite{Sak1}, we know that if $\mathcal{M}$ is complete, then any pair of points in $\mathcal{M}$ can be joined by a minimal geodesic. Moreover, $(\mathcal{M}, d)$ is a complete metric space and bounded closed subsets are compact. If $\mathcal{M}$ is a complete Riemannian manifold, then the exponential map $\exp_x : T_x\mathcal{M} \rightarrow \mathcal{M}$ at $x$ is defined by $\exp_xv =\gamma_v(1, x)$ for each $v \in T_x\mathcal{M}$, where $\gamma(\cdot) = \gamma_v(\cdot,x)$ is the geodesic starting at $x$ with velocity $v$, that is, $\gamma(0)=x$ and $\gamma'(0)=v$. It is easy to see that $\exp_xtv = \gamma_v(t, x)$ for each real number $t$. Note that the mapping $\exp_x$ is differentiable on $T_x\mathcal{M}$ for any $x \in \mathcal{M}$. By the inverse mapping theorem, there exists an inverse exponential map $\exp^{-1}_x: \mathcal{M} \rightarrow T_x \mathcal{M}$. Moreover, the geodesic is the unique shortest path with $\|\exp^{-1}_xy\|= \|\exp^{-1}_yx\|= d(x, y)$, where $d(x, y)$ is the geodesic distance between $x$ and $y$ in $\mathcal{M}$. For further details, we refer to \cite{Sak1}.

A complete simply connected Riemannian manifold of nonpositive sectional curvature is called a Hadamard manifold. If $\mathcal{M}$ is a Hadamard manifold, then $\exp^{-1}_x: \mathcal{M} \rightarrow T_x \mathcal{M}$ is a diffeomorphism for every $x \in  \mathcal{M}$ and if $x,y \in \mathcal{M}$, then there exists a unique minimal geodesic joining $x$ to $y$. The rest of the paper, we assume that $\mathcal{M}$ is a Hadamard manifold. The following results are known and will be useful.

\begin{proposition}\label{proposition 1.1}(~\cite{Sak1})
Let $\mathcal{M}$ be a Hadamard manifold and $p \in \mathcal{M}$. Then $\exp_p : T_p\mathcal{M} \rightarrow \mathcal{M}$ is a diffeomorphism, and for any two points $p, q \in \mathcal{M}$ there exists a unique normalized geodesic joining $p$ to $q$, which is, in fact, a minimal geodesic.
\end{proposition}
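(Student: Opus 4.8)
The plan is to recognize this statement as the Cartan--Hadamard theorem and to prove it in three stages: first that $\exp_p$ is everywhere defined and a local diffeomorphism, then that it is a covering map, and finally that it is a global diffeomorphism, from which the assertions about geodesics follow. Since $\mathcal{M}$ is complete, the Hopf--Rinow theorem already quoted in the excerpt guarantees that every geodesic emanating from $p$ extends to all of $\mathbb{R}$, so $\exp_p$ is genuinely defined on the whole tangent space $T_p\mathcal{M}$. The first substantial step is to show that $d(\exp_p)_v$ is nonsingular at every $v \in T_p\mathcal{M}$, equivalently that $p$ has no conjugate point along any geodesic. For this I would use Jacobi fields: if $J$ is a Jacobi field along a geodesic $\gamma$ with $\gamma(0)=p$, then from the Jacobi equation $\frac{D^2J}{dt^2} + R(J,\gamma')\gamma' = 0$ one computes $\frac{d^2}{dt^2}\|J\|^2 = 2\|J'\|^2 - 2\langle R(J,\gamma')\gamma',J\rangle$. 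Because the sectional curvature is nonpositive, the curvature term is $\le 0$, so $\|J\|^2$ is a convex, nonnegative function of $t$. A nontrivial Jacobi field with $J(0)=0$ therefore cannot vanish a second time (else convexity would force $J\equiv 0$ on an interval and hence everywhere), so there are no conjugate points and $d(\exp_p)_v$ is an isomorphism for every $v$. Thus $\exp_p$ is a local diffeomorphism.

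Next I would promote this local diffeomorphism to a covering map. Pulling the Riemannian metric of $\mathcal{M}$ back through $\exp_p$ makes $T_p\mathcal{M}$ a Riemannian manifold on which $\exp_p$ is a local isometry, and the rays $t \mapsto tv$ issuing from the origin are geodesics defined for all $t$; by Hopf--Rinow this pulled-back metric is complete. The key fact I would invoke is that a local isometry from a complete Riemannian manifold onto a connected Riemannian manifold is necessarily a covering map, established by lifting geodesics and using completeness to guarantee that the lifts exist for all parameter values. This is the step I expect to be the main obstacle, since it is where completeness enters in an essential, nonlocal way and requires the path- and homotopy-lifting machinery rather than a direct computation.

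Finally, because $\mathcal{M}$ is simply connected and $T_p\mathcal{M}$ is connected, the covering map $\exp_p$ must be single-sheeted, hence a homeomorphism; combined with its being a local diffeomorphism, it is a diffeomorphism. The remaining assertions are then immediate. Given $p,q \in \mathcal{M}$, the vector $v = \exp_p^{-1}(q)$ is the unique tangent vector with $\exp_p v = q$, so $t \mapsto \exp_p(tv)$, $t \in [0,1]$, is the unique geodesic joining $p$ to $q$, and after reparametrizing by arc length it becomes the unique normalized geodesic between the two points. Its minimality follows from Hopf--Rinow, which supplies at least one minimal geodesic from $p$ to $q$; by the uniqueness just established this minimal geodesic must coincide with $t\mapsto \exp_p(tv)$, and hence $\|\exp_p^{-1}q\| = d(p,q)$.
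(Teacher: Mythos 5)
Your argument is correct and is exactly the classical proof of the Cartan--Hadamard theorem: nonpositive curvature makes $\|J\|^2$ convex along geodesics so there are no conjugate points, the pullback metric plus the lemma that a local isometry from a complete manifold is a covering map upgrades $\exp_p$ to a covering, and simple connectedness forces it to be one-sheeted, after which uniqueness and minimality of the normalized geodesic follow from injectivity of $\exp_p$ together with Hopf--Rinow. The paper gives no proof of its own---it quotes the proposition from Sakai's book \cite{Sak1}, where essentially this same argument appears---so your proposal matches the intended proof and has no gaps.
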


This proposition yields that $\mathcal{M}$ is diffeomorphic to the Euclidean space $\mathbb{R}^n$. Thus, we see that $\mathcal{M}$ has the same topology and differential structure as $\mathbb{R}^n$. Moreover, Hadamard manifolds and Euclidean spaces have some similar geometrical properties, one of the most important proprieties is illustrated in the following proposition.

\begin{proposition}\label{proposition 1.2}(~\cite{Sak1})
Let $\Delta(p_1, p_2, p_3)$ be a geodesic triangle in a Hadamard manifold $\mathcal{M}$. For each $i = 1, 2, 3(\mod 3)$, let $\gamma_i : [0, l_i] \rightarrow \mathcal{M}$ denote the geodesic joining $p_i$ to $p_{i+1}$. Let $l_i = L(\gamma_i )$ and $\alpha_i:= \angle (\gamma'_i(0),-\gamma'_{i-1}(l_{i-1}))$ be the angle between tangent vectors $\gamma'_i(0)$ and $\gamma'_{i-1}(l_{i-1})$. Then
\item (i) $\alpha_1+\alpha_2+\alpha_3\leq \pi$;
\item (ii) $l^2_i+l^2_{i+1}-2l_il_{i+1}\cos \alpha_{i+1}\leq l^2_{i-1}$;
\item (iii) $l_{i+1} \cos \alpha_{i+2}+l_i\cos \alpha_i\geq l_{i+2}$.
\end{proposition}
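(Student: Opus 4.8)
The plan is to reduce all three inequalities to elementary Euclidean trigonometry by comparing the geodesic triangle $\Delta(p_1,p_2,p_3)\subset\mathcal{M}$ with a \emph{comparison triangle} in the plane $\mathbb{R}^2$. First I would note that the side lengths $l_1,l_2,l_3$ satisfy the triangle inequality, since $(\mathcal{M},d)$ is a metric space and $l_i=d(p_i,p_{i+1})$; hence there exists a Euclidean triangle $\bar\Delta(\bar p_1,\bar p_2,\bar p_3)\subset\mathbb{R}^2$ with the same side lengths $\bar l_i=l_i$. Denote by $\bar\alpha_i$ the Euclidean angle of $\bar\Delta$ at the vertex $\bar p_i$, keeping the convention that $\alpha_i$ (resp.\ $\bar\alpha_i$) is the angle at $p_i$ (resp.\ $\bar p_i$) and that $l_i$ is the side opposite $p_{i+2}$, with all indices read modulo $3$.

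The crux of the argument is the angle-comparison estimate
\[
\alpha_i \le \bar\alpha_i,\qquad i=1,2,3,
\]
which is exactly the content of Toponogov's comparison theorem (equivalently, the Alexandrov/CAT(0) angle comparison) specialized to the nonpositive curvature hypothesis on $\mathcal{M}$. I expect this step to be the main obstacle: it is the only genuinely Riemannian input, and its proof rests on Rauch's comparison theorem for Jacobi fields, which translates the sign condition on the sectional curvature into the statement that geodesic triangles in $\mathcal{M}$ are ``thinner'' than their Euclidean models, so the angle subtended at each vertex cannot exceed the Euclidean one. Once this inequality is granted, the remaining deductions are purely trigonometric and use only that $\cos$ is decreasing on $[0,\pi]$, so that $\alpha_i\le\bar\alpha_i$ yields $\cos\alpha_i\ge\cos\bar\alpha_i$.

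For (i), I would use that the angle sum of a Euclidean triangle is exactly $\pi$: summing $\alpha_i\le\bar\alpha_i$ over $i$ gives $\alpha_1+\alpha_2+\alpha_3\le\bar\alpha_1+\bar\alpha_2+\bar\alpha_3=\pi$. For (ii), the Euclidean law of cosines applied at the vertex $\bar p_{i+1}$ (whose adjacent sides are $l_i,l_{i+1}$ and whose opposite side is $l_{i-1}$) reads $l_{i-1}^2=l_i^2+l_{i+1}^2-2l_il_{i+1}\cos\bar\alpha_{i+1}$; replacing $\cos\bar\alpha_{i+1}$ by the smaller quantity $\cos\alpha_{i+1}$ can only decrease the right-hand side, which yields $l_i^2+l_{i+1}^2-2l_il_{i+1}\cos\alpha_{i+1}\le l_{i-1}^2$. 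For (iii), the Euclidean projection formula for the side $l_{i+2}$ (joining $\bar p_{i+2}$ to $\bar p_i$, with adjacent angles $\alpha_{i+2}$ at $\bar p_{i+2}$ and $\alpha_i$ at $\bar p_i$) gives $l_{i+2}=l_{i+1}\cos\bar\alpha_{i+2}+l_i\cos\bar\alpha_i$; since $\cos\alpha_{i+2}\ge\cos\bar\alpha_{i+2}$ and $\cos\alpha_i\ge\cos\bar\alpha_i$, we obtain $l_{i+1}\cos\alpha_{i+2}+l_i\cos\alpha_i\ge l_{i+2}$, which is the claimed inequality. A final check that the side/vertex correspondences invoked in the law of cosines and the projection formula match the stated indices (modulo $3$) completes the three parts.
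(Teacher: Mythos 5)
Your proof is correct, but note that the paper offers no proof to compare against: Proposition~\ref{proposition 1.2} is imported verbatim from Sakai's monograph \cite{Sak1}, so the relevant benchmark is the textbook argument --- and yours is essentially that argument. The reduction to a Euclidean comparison triangle is legitimate (the triangle inequality for $l_i=d(p_i,p_{i+1})$ holds because geodesics in a Hadamard manifold are minimal, and the existence of the comparison triangle is exactly the paper's Lemma~\ref{lemma 2.2}), your index bookkeeping checks out ($\alpha_i$ is the interior angle at $p_i$, side $l_i$ is opposite $p_{i+2}$, and the law of cosines at $\bar p_{i+1}$ and the projection formula $l_{i+2}=l_{i+1}\cos\bar\alpha_{i+2}+l_i\cos\bar\alpha_i$ are applied with the right correspondences, the latter valid with signed cosines even in the obtuse case), and all three parts do follow from the single comparison $\alpha_i\le\bar\alpha_i$ together with monotonicity of $\cos$ on $[0,\pi]$. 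You correctly isolate that angle comparison as the sole Riemannian input; the only remark worth making is that in the standard development the logical order is often reversed --- one first proves the CAT(0) distance/law-of-cosines inequality (your (ii)) directly from Rauch's comparison theorem, and then deduces the Alexandrov angle comparison from it --- so if you wanted a self-contained proof you should derive $\alpha_i\le\bar\alpha_i$ from Rauch/Jacobi-field estimates rather than cite Toponogov, whose full statement is a stronger result than what is needed here. A trivial loose end: the degenerate case of collinear vertices should be allowed in the comparison triangle, but all inequalities hold there as well.
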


Considering the distance and the exponential map, we have that the following inequalities are equivalent to Proposition \ref{proposition 1.2} (ii) and (iii).
\begin{equation}\label{eq0.5}
\begin{aligned}
d^2(p_i,p_{i+1})+d^2(p_{i+1},p_{i+2})-2\langle \exp^{-1}_{p_{i+1}}p_i,\exp^{-1}_{p_{i+1}}p_{i+2} \rangle \leq d^2(p_{i-1},p_i),
\end{aligned}
\end{equation}
and
\[
d^2(p_i , p_{i+1}) \leq \langle \exp^{-1}_{p_i}p_{i+2}, \exp^{-1}_{p_i}p_{i+1} \rangle + \langle \exp^{-1}_{p_{i+1}}p_{i+2}, \exp^{-1}_{p_{i+1}}p_i \rangle,
\]
since $\langle \exp^{-1}_{p_{i+1}}p_i, \exp^{-1}_{p_{i+1}}p_{i+2} \rangle = d(p_i,p_{i+1})d(p_{i+1}, p_{i+2}) \cos \alpha_{i+1} $. For further details, we refer to  \cite{Fer1}.

\begin{lemma}\label{lemma 2.2}(~\cite{Rei1})
Let $\Delta(p, q, r)$ be a geodesic triangle in Hadamard manifold $\mathcal{M}$. Then there exists a triangle $\Delta(\bar{p}, \bar{q}, \bar{r})$ ($\bar{p}, \bar{q}, \bar{r} \in \mathbb{R}^2$) for $\Delta(p, q, r)$ such that
\[
d(p,q)=\|\bar{p}-\bar{q}\|, \quad d(q,r)=\|\bar{q}-\bar{r}\|, \quad d(r,p)=\|\bar{r}-\bar{p}\|.
\]
\end{lemma}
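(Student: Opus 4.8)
The plan is to reduce this to an elementary fact of planar Euclidean geometry: given three nonnegative reals that satisfy all three triangle inequalities, there is a triangle in $\mathbb{R}^2$ realizing them as side lengths. To set this up, I would first record the three geodesic side lengths
\[
a = d(p,q), \qquad b = d(q,r), \qquad c = d(r,p).
\]
By the Hopf--Rinow Theorem, as recalled in the preliminaries, $(\mathcal{M}, d)$ is a complete metric space, so $d$ satisfies the triangle inequality. Applying it to the three vertices $p, q, r$ gives
\[
a \le b + c, \qquad b \le c + a, \qquad c \le a + b.
\]
These are exactly the constraints needed for a Euclidean realization, so the geometric content of the lemma is contained in these inequalities.

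The second step is the explicit construction of $\Delta(\bar p, \bar q, \bar r)$. I would place $\bar p = (0,0)$ and $\bar q = (a, 0)$, which immediately gives $\|\bar p - \bar q\| = a = d(p,q)$. It then remains to locate $\bar r$ so that $\|\bar r - \bar p\| = c$ and $\|\bar r - \bar q\| = b$; this is the intersection of the circle of radius $c$ centered at $\bar p$ with the circle of radius $b$ centered at $\bar q$. Since the distance $a$ between the two centers satisfies $|b - c| \le a \le b + c$ (which is equivalent to the three triangle inequalities above), these two circles meet, so a point $\bar r$ with the prescribed distances exists. Solving explicitly, its first coordinate is $(a^2 + c^2 - b^2)/(2a)$ and its second coordinate is the nonnegative square root determined by $\|\bar r - \bar p\| = c$; one checks directly that this yields the required equalities $d(q,r) = \|\bar q - \bar r\|$ and $d(r,p) = \|\bar r - \bar p\|$.

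The only delicate points, which I would address briefly rather than belabor, are the degenerate configurations. If one of the triangle inequalities holds with equality, the three comparison vertices become collinear; the construction above still produces a well-defined $\bar r$ (lying on the segment or its extension), and all three distance identities continue to hold, so a degenerate comparison triangle is still admissible for the statement. I would also remark that $\bar r$ is determined only up to reflection across the line through $\bar p$ and $\bar q$, so the comparison triangle is not unique; however, the lemma asserts only the existence of some $\Delta(\bar p, \bar q, \bar r)$ matching the side lengths, and either choice of sign for the second coordinate of $\bar r$ satisfies this, so nonuniqueness is immaterial. No genuine obstacle arises here: the substance of the argument is entirely carried by the metric triangle inequality supplied by Hopf--Rinow together with the classical planar construction.
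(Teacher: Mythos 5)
Your proof is correct, and it is worth noting that the paper itself offers no proof of this lemma at all: it is stated as a quoted result with a citation to the literature, so there is no ``paper proof'' to match against. Your argument supplies exactly the standard self-contained justification: the three side lengths $a=d(p,q)$, $b=d(q,r)$, $c=d(r,p)$ satisfy the triangle inequalities because $d$ is a metric, and any such triple is realizable in $\mathbb{R}^2$ by the two-circle construction, with $\bar r=\bigl((a^2+c^2-b^2)/(2a),\,\sqrt{c^2-(a^2+c^2-b^2)^2/(4a^2)}\,\bigr)$; the verification $\|\bar r-\bar q\|^2=b^2$ follows by expanding, and the discriminant is nonnegative precisely because $|b-c|\le a\le b+c$. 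Your observation that nonuniqueness (reflection across the line $\bar p\bar q$) is immaterial is also right, and correctly matches the paper's remark that the comparison triangle is unique only up to isometry. Two small points deserve care. First, the triangle inequality for $d$ does not come from the Hopf--Rinow theorem or completeness; it holds for the Riemannian distance on any connected manifold simply because $d$ is defined as an infimum of curve lengths, so the appeal to Hopf--Rinow is unnecessary (completeness is what guarantees the geodesic sides of $\Delta(p,q,r)$ exist, which the Hadamard hypothesis already provides). Second, your explicit formula for $\bar r$ divides by $2a$, so the degenerate case $a=0$ (i.e.\ $p=q$, which forces $b=c$) is not covered by the formula as written and needs the one-line separate placement $\bar p=\bar q=(0,0)$, $\bar r=(b,0)$; your discussion of degeneracy addresses collinearity but not this coincidence case. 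Neither point affects the substance: the proof is sound and, unlike the paper, actually establishes the lemma rather than citing it.
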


The triangle $\Delta(\bar{p}, \bar{q}, \bar{r})$ is called the comparison triangle of the geodesic triangle $\Delta(p, q, r)$, which is unique up to isometry of $\mathcal{M}$.

\begin{lemma}\label{lemma 1.1}(~\cite{Li1})
Let $\{x_n\}$ be a sequence in $\mathcal{M}$ such that $x_n \rightarrow x_0 \in  \mathcal{M}$. Then the following assertions hold.
\item  (i)  For any $y \in \mathcal{M}$, we have $\exp^{-1}_{x_n}y\rightarrow \exp^{-1}_{x_0}y$ and  $\exp^{-1}_y x_n \rightarrow \exp^{-1}_y x_0$.
\item  (ii) If $v_n\in T_{x_n}\mathcal{M}$ and $v_n\rightarrow v_0$, then $v_0 \in T_{x_0}\mathcal{M}$.
\item  (iii) Given $u_n,v_n \in T_{x_n}\mathcal{M}$ and $u_0,v_0\in T_{x_0}\mathcal{M}$, if $u_n\rightarrow u_0$ and $v_n\rightarrow v_0$, then $\langle u_n,v_n \rangle \rightarrow \langle u_0,v_0 \rangle$.
\item  (iv) For any $u\in T_{x_0}\mathcal{M}$, the function $A: \mathcal{M} \rightarrow T\mathcal{M}$ defined by $A(x)=P_{x,x_0}u$ for each $x\in \mathcal{M}$ is continuous on $\mathcal{M}$.
\end{lemma}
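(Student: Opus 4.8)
The plan is to derive all four assertions from the smoothness (hence continuity) of the standard geometric objects attached to a Hadamard manifold, working throughout in the manifold topology of the tangent bundle $T\mathcal{M}$ so that the convergence of vectors lying in the different tangent spaces $T_{x_n}\mathcal{M}$ is given a precise meaning (recall that, by the Hopf--Rinow theorem quoted above, $x_n\to x_0$ in the distance $d$ coincides with convergence in the manifold topology). The single structural fact I would use repeatedly is Proposition \ref{proposition 1.1}: on a Hadamard manifold $\exp_p$ is a global diffeomorphism and any two points are joined by a unique minimal geodesic, so the inverse exponential map and parallel transport along minimal geodesics are globally and smoothly defined.

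For (i), I would invoke that the map $\Phi:\mathcal{M}\times\mathcal{M}\to T\mathcal{M}$, $\Phi(p,q)=\exp_p^{-1}q$, is smooth; this follows from the smoothness of $(p,v)\mapsto\exp_p v$ together with the inverse function theorem guaranteed by Proposition \ref{proposition 1.1}. Fixing the second argument and letting $p=x_n\to x_0$ yields $\exp_{x_n}^{-1}y\to\exp_{x_0}^{-1}y$, while fixing the first argument gives $\exp_y^{-1}x_n\to\exp_y^{-1}x_0$; both convergences take place in $T\mathcal{M}$, as required.

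Assertion (ii) is just the statement that the canonical projection $\pi:T\mathcal{M}\to\mathcal{M}$ is continuous and that the tangent spaces are its level sets. If $v_n\in T_{x_n}\mathcal{M}$ and $v_n\to v_0$ in $T\mathcal{M}$, then $\pi(v_n)=x_n\to\pi(v_0)$; since $x_n\to x_0$, uniqueness of limits forces $\pi(v_0)=x_0$, that is, $v_0\in T_{x_0}\mathcal{M}$. Assertion (iii) is the continuity of the Riemannian metric: the metric is a smooth $(0,2)$-tensor field, so the induced function on the Whitney sum, $(u,v)\mapsto\langle u,v\rangle$, is continuous on $T\mathcal{M}\oplus T\mathcal{M}$; applying it to $(u_n,v_n)\to(u_0,v_0)$ gives $\langle u_n,v_n\rangle\to\langle u_0,v_0\rangle$.

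The only part that requires more than invoking a named smooth object is (iv), which I expect to be the main obstacle. Here one must track two layers of dependence: the minimal geodesic $\gamma_x$ from $x_0$ to $x$ depends smoothly on $x$ (again by Proposition \ref{proposition 1.1}, since $\gamma_x(t)=\exp_{x_0}(t\,\exp_{x_0}^{-1}x)$), and the parallel transport $P_{x,x_0}u$ is the endpoint value $V(\gamma_x(1))$ of the solution of the linear first-order equation $\nabla_{\gamma_x'}V=0$ with initial condition $V(\gamma_x(0))=u$. By the theorem on smooth dependence of solutions of ordinary differential equations on initial data and parameters, $V$ depends continuously on the data defining $\gamma_x$, and composing with the smooth dependence of $\gamma_x$ on $x$ shows that $x\mapsto P_{x,x_0}u\in T\mathcal{M}$ is continuous. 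Should a fully self-contained argument be preferred, one could instead fix local trivializations of $T\mathcal{M}$ near $x_0$ and near the relevant image points and rewrite each map in coordinates, whereupon (i)--(iv) reduce to elementary continuity statements for smooth coordinate expressions; the comparison-triangle estimates of Proposition \ref{proposition 1.2} may additionally be used in (i) to produce explicit distance bounds.
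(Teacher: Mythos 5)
Your proof is sound, but note that the paper itself gives no argument for this lemma: it is imported verbatim from \cite{Li1}, so there is no in-paper proof to diverge from. Your sketch --- joint smoothness of $(p,q)\mapsto\exp_p^{-1}q$ via the Cartan--Hadamard diffeomorphism property, continuity of the bundle projection $\pi$ for (ii), continuity of the metric tensor on the fibred product for (iii), and smooth dependence of the parallel-transport ODE on the geodesic $\gamma_x(t)=\exp_{x_0}\bigl(t\,\exp_{x_0}^{-1}x\bigr)$ for (iv) --- is precisely the standard argument used in that reference, and it is correct as stated.
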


\begin{definition}\label{definition 1.2}
A subset $C$ is said to be convex if, for every two points $x$ and $y$ in $C$, the geodesic joining $x$ to $y$ is contained in $C$,
that is, if $\gamma: [a, b] \rightarrow \mathcal{M} $ is a geodesic such that $x = \gamma(a)$ and $y = \gamma(b)$, then $\gamma((1-t)a+tb) \in C$ for all $t \in [0, 1]$.
\end{definition}

\begin{definition}\label{definition 1.21}
A real function $f$ defined in $\mathcal{M}$ is said to be  convex if, for any geodesic $\gamma$ of $\mathcal{M}$,
the composition function $f\circ \gamma : [a, b] \rightarrow \mathbb{R}$ is convex, that is,
\[
(f\circ \gamma)(ta+(1-t)b)\leq t(f\circ \gamma)(a)+(1-t)(f\circ \gamma)(b),
\]
\end{definition}
where $a, b \in \mathbb{R}$, and $t \in  [0, 1]$.

\begin{definition}\label{definition 1.12}
Let $f: \mathcal{M}  \rightarrow \mathbb{R}$ be a convex and $x\in \mathcal{M}$. A vector $p \in T_x\mathcal{M}$ is
said to be  a subgradient of $f$ at $x$ if for any $y\in \mathcal{M}$,
\[
f(y)\geq f(x)+\langle p,\exp^{-1}_x y \rangle.
\]
\end{definition}
The set of all subgradients of $f$, denoted by $\partial f(x)$, is called the subdifferential of $f$ at $x$, which is closed convex set.
Let $D(\partial f)$ denote the domain of $\partial f$ defined by $D(\partial f)=\{x\in \mathcal{M}: \partial f(x)\neq \emptyset\}$.
The existence of subgradients for convex functions is guaranteed by the following proposition.

\begin{proposition}\label{proposition 1.3}(~\cite{Fer1})
Let $\mathcal{M}$ be a Hadamard manifold and $f: [a, b] \rightarrow \mathbb{R}$ be convex. Then, for all $x \in \mathcal{M}$,
the subdifferential $\partial f(x)$ of $f$ at $x$ is nonempty. That is, $D(\partial f)= \mathcal{M}$.
\end{proposition}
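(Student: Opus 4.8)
The plan is to produce a subgradient at a fixed point $x \in \mathcal{M}$ by first constructing the one-sided directional derivative of $f$ at $x$, showing that it is a finite sublinear functional on the tangent space $T_x\mathcal{M}$, and then invoking the (finite-dimensional) Hahn--Banach theorem to dominate it by a linear functional, which, via the Riesz identification with the inner product $\langle\cdot,\cdot\rangle_x$, is the desired element of $\partial f(x)$. Throughout I read the hypothesis as $f:\mathcal{M}\to\mathbb{R}$ convex in the sense of Definition~\ref{definition 1.21}, so that for every $v\in T_x\mathcal{M}$ the map $t\mapsto f(\exp_x tv)$ is convex on $\mathbb{R}$, since $t\mapsto\exp_x tv$ is a geodesic by Proposition~\ref{proposition 1.1}.

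First I would define, for $v\in T_x\mathcal{M}$, the quantity $f'(x;v)=\lim_{t\to 0^+}\tfrac{f(\exp_x tv)-f(x)}{t}$. Convexity of the single-variable function $t\mapsto f(\exp_x tv)$ makes the difference quotient $\tfrac{f(\exp_x tv)-f(x)}{t}$ nondecreasing in $t>0$, and it is bounded below by comparison with a backward slope in direction $-v$ (using that $f$ is finite everywhere); hence the limit exists, is finite, and equals the infimum of the quotients over $t>0$. Positive homogeneity, $f'(x;\lambda v)=\lambda f'(x;v)$ for $\lambda\ge 0$, is immediate from the definition.

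The main obstacle is subadditivity, $f'(x;v+w)\le f'(x;v)+f'(x;w)$, which is exactly where the nonpositive curvature enters. For small $t>0$ set $u=\tfrac12(v+w)$ and let $m_t$ be the midpoint of the minimal geodesic joining $\exp_x tv$ to $\exp_x tw$; convexity of $f$ gives $f(m_t)\le\tfrac12 f(\exp_x tv)+\tfrac12 f(\exp_x tw)$. Using the comparison triangle of Lemma~\ref{lemma 2.2} together with the curvature inequality \eqref{eq0.5} (equivalently, the smoothness of $\exp_x$ with $d(\exp_x)_0=\mathrm{id}$), one shows that the geodesic midpoint and the ``exponential average'' agree to first order, $d(\exp_x tu,\,m_t)=o(t)$ as $t\to 0^+$. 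Combining this with the local Lipschitz continuity of the finite geodesically convex function $f$ near $x$, the error $\tfrac{|f(\exp_x tu)-f(m_t)|}{t}\le L\,\tfrac{d(\exp_x tu,m_t)}{t}\to 0$; dividing the convexity inequality by $t$ and letting $t\to 0^+$ yields $f'(x;u)\le\tfrac12\bigl(f'(x;v)+f'(x;w)\bigr)$, and positive homogeneity then upgrades this midpoint inequality to full subadditivity. Establishing the two ingredients here---the first-order agreement of midpoints and the local Lipschitz bound---is the technical heart of the argument.

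With $f'(x;\cdot)$ shown to be sublinear on the finite-dimensional space $T_x\mathcal{M}$, the Hahn--Banach theorem provides a linear functional dominated by it, and hence a vector $p\in T_x\mathcal{M}$ with $\langle p,v\rangle\le f'(x;v)$ for all $v\in T_x\mathcal{M}$. To finish, fix any $y\in\mathcal{M}$ and put $v=\exp^{-1}_x y$, so that $\gamma(t)=\exp_x tv$ is the minimal geodesic from $x$ to $y$ with $\gamma(1)=y$. Monotonicity of the difference quotient gives $f'(x;v)\le f(\gamma(1))-f(\gamma(0))=f(y)-f(x)$, whence $\langle p,\exp^{-1}_x y\rangle\le f'(x;v)\le f(y)-f(x)$. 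This is precisely $f(y)\ge f(x)+\langle p,\exp^{-1}_x y\rangle$ for every $y\in\mathcal{M}$, so $p\in\partial f(x)$ and $\partial f(x)\neq\emptyset$. Since $x$ was arbitrary, $D(\partial f)=\mathcal{M}$.
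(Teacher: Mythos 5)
Your proof is correct, but note first that the paper does not prove Proposition~\ref{proposition 1.3} at all: it is quoted from the reference [Fer1] (Ferreira--Oliveira), where the statement also carries an evident typo that you silently and correctly repair, reading $f:[a,b]\rightarrow\mathbb{R}$ as $f:\mathcal{M}\rightarrow\mathbb{R}$ convex in the sense of Definition~\ref{definition 1.21}. Your route -- existence and finiteness of the directional derivative $f'(x;\cdot)$ by monotonicity of geodesic difference quotients, sublinearity, Hahn--Banach on $T_x\mathcal{M}$, and the slope inequality $f'(x;\exp_x^{-1}y)\le f(y)-f(x)$ -- is the classical argument in the cited literature, and every step you flag as the ``technical heart'' does in fact hold. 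Two remarks on emphasis. First, your attribution of the midpoint estimate $d(\exp_x tu,\,m_t)=o(t)$ to Lemma~\ref{lemma 2.2} and the comparison inequality \eqref{eq0.5} is misleading: those give only one-sided distance comparisons and do not by themselves locate $m_t$ relative to $\exp_x tu$. The parenthetical alternative you offer is the real proof -- smoothness of $\exp_x$ with $d(\exp_x)_0=\mathrm{id}$; in normal coordinates at $x$ the metric is $\delta_{ij}+O(\lvert y\rvert^2)$, so the geodesic midpoint of $\exp_x tv$ and $\exp_x tw$ lies within $O(t^2)$ of $\exp_x tu$. Consequently, nonpositive curvature is \emph{not} where subadditivity comes from (the same first-order argument works locally on any Riemannian manifold); what the Hadamard hypothesis actually buys you is global completeness and unique minimal geodesics, so that $t\mapsto f(\exp_x tv)$ is convex on all of $\mathbb{R}$ (giving the finite lower bound on the forward slopes via the backward slope) and $\exp_x^{-1}y$ is globally defined for the final subgradient inequality. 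Second, the local Lipschitz continuity of a finite geodesically convex function, which your error term $\lvert f(\exp_x tu)-f(m_t)\rvert\le L\,d(\exp_x tu,m_t)$ relies on, itself needs local boundedness above (a finite-dimensional covering argument along geodesics, as in Udriste's book); you are right to isolate it, and it is standard, so this is an acknowledged dependency rather than a gap.
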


Next, we recall some concepts of monotonicity of a bifunction.

\begin{definition}\label{definition 1.1}(~\cite{Nem2,Nem3})
A bifunction $f : C \times C \rightarrow \mathbb{R}$ is said to be
\item (i)  monotone if for any $(x, y) \in C \times C$,
\[
f(x, y) + f(y, x) \leq 0;
\]
\item (ii) strongly monotone if for any $(x, y) \in C \times C$, there exists a positive constant $\gamma$ such that
\[
f(x,y)+f(y,x)\leq -\gamma d^2(x,y);
\]
\item (iii) pseudomonotone if for any $(x, y) \in C \times C$,
\[
f(x,y)\geq 0\Rightarrow f(y,x)\leq 0;
\]
 \item (iv) strongly pseudomonotone if for any $(x, y) \in C \times C$, there exists a positive constant $\gamma$ such that
\[
f(x,y)\geq 0 \Rightarrow f(y,x) \leq -\gamma d^2(x,y).
\]
\end{definition}
It follows from the definitions that the following implications hold:
\[
(ii)\Rightarrow (i) \Rightarrow (iii) \quad \mbox{ and } \quad  (ii)\Rightarrow (iv) \Rightarrow (iii).
\]

\begin{definition}\label{definition 1.13}(~\cite{Mas2})
A bifunction $f : C \times C \rightarrow \mathbb{R}$ is said to satisfy a Lipschitz-type condition on $C$ if there exist two positive constants $\gamma_1$ and $\gamma_2$ such that
\[
f(x,y)+f(y,z)\geq f(x,z)-\gamma_1d^2(x,y)-\gamma_2d^2(y,z), \quad \forall  x, y, z \in C.
\]
\end{definition}

Let $f: \mathcal{M}\rightarrow \mathbb{R}$ be a convex, proper and lower semicontinuous function. The proximal point algorithm generates, for a initial point $x_0 \in \mathcal{M}$, a sequence $\{x_n\} \subset \mathcal{M}$, which is defined by the following:
\begin{equation}\label{eq0.6}
\begin{aligned}
x_{n+1}=\arg\min_{t \in \mathcal{M} } \left\{f(t)+\frac{\lambda_n}{2}d^2(x_n,t) \right\} , \quad \lambda_n \subset (0,+\infty).
\end{aligned}
\end{equation}
The following lemmas are useful for the convergence of our proposed algorithm.

\begin{lemma}\label{lemma 1.5}(~\cite{Fer1})
Let $f: \mathcal{M}\rightarrow \mathbb{R}$ be a convex, proper and lower semicontinuous function. Then the sequence $\{x_n\}$ generated by \eqref{eq0.6} is well defined, and characterized by
\[
\lambda_n(\exp^{-1}_{x_{n+1}}x_n) \in \partial f(x_{n+1}).
\]
\end{lemma}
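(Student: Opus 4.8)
The plan is to separate the statement into two claims, the well-definedness of $x_{n+1}$ (existence and uniqueness of the minimizer in \eqref{eq0.6}) and the variational characterization, and to treat them in that order. Throughout I would write $\Phi_n(t)=f(t)+\frac{\lambda_n}{2}d^2(x_n,t)$ for the objective minimized at step $n$, and I would exploit the two structural features of a Hadamard manifold that distinguish the argument from the Euclidean case.

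For well-definedness I would use, first, that nonpositivity of the sectional curvature makes $t\mapsto\frac{1}{2}d^2(x_n,t)$ strongly convex along geodesics; the relevant quantitative control is exactly the comparison inequality \eqref{eq0.5} obtained from Proposition \ref{proposition 1.2} via the comparison triangle of Lemma \ref{lemma 2.2}. Adding the convex function $f$ then keeps $\Phi_n$ strongly (hence strictly) convex. Second, by Proposition \ref{proposition 1.3} and Definition \ref{definition 1.12}, $f$ admits a subgradient and is therefore minorized by a function growing at most linearly in $d(x_n,\cdot)$; since $d^2(x_n,\cdot)$ grows quadratically, $\Phi_n$ is coercive. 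Combining lower semicontinuity with coercivity and the Hopf--Rinow theorem (bounded closed sets are compact) produces a minimizer, and strict convexity forces it to be unique. Hence $x_{n+1}$ is well defined.

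For the characterization the key is Fermat's rule together with smoothness of the distance-squared term. Since $x_{n+1}$ minimizes $\Phi_n$, we have $0\in\partial\Phi_n(x_{n+1})$. The map $t\mapsto\frac{\lambda_n}{2}d^2(x_n,t)$ is differentiable on $\mathcal{M}$ with gradient $-\lambda_n\exp^{-1}_t x_n$ at $t$, so at $t=x_{n+1}$ its gradient is $-\lambda_n\exp^{-1}_{x_{n+1}}x_n$. Because one summand is smooth, the subdifferential sum rule applies and gives $\partial\Phi_n(x_{n+1})=\partial f(x_{n+1})-\lambda_n\exp^{-1}_{x_{n+1}}x_n$; substituting into the Fermat condition and rearranging yields precisely $\lambda_n\exp^{-1}_{x_{n+1}}x_n\in\partial f(x_{n+1})$.

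If one prefers to avoid invoking a sum rule, the same conclusion follows directly from Definition \ref{definition 1.12}: starting from the minimizing inequality $\Phi_n(x_{n+1})\le\Phi_n(y)$ for every $y$, one estimates the increment of the distance term along the geodesic from $x_{n+1}$ to $y$ using \eqref{eq0.5} and lets the geodesic parameter tend to zero to extract the subgradient inequality for $f$ at $x_{n+1}$ with subgradient $\lambda_n\exp^{-1}_{x_{n+1}}x_n$. I expect the main obstacle to be the correct treatment of the gradient of the squared distance and its sign convention, since this is where the manifold geometry genuinely enters; by contrast, existence, uniqueness, and the final algebraic rearrangement are comparatively routine once strong convexity is in hand.
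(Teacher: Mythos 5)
Note first that the paper does not prove this lemma at all: it is quoted verbatim from \cite{Fer1} (Ferreira--Oliveira), so there is no in-paper proof to compare against. Your argument is correct and is essentially the standard proof in that reference: existence and uniqueness of $x_{n+1}$ from geodesic strong convexity of $t\mapsto\frac{\lambda_n}{2}d^{2}(x_n,t)$ on a Hadamard manifold together with coercivity obtained by minorizing $f$ through a subgradient (Proposition \ref{proposition 1.3}), and the characterization from Fermat's rule using $\mathrm{grad}_{t}\,\tfrac{1}{2}d^{2}(x_n,t)=-\exp^{-1}_{t}x_n$, which is smooth everywhere since a Hadamard manifold has no cut locus. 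Two small points of precision: the quantitative strong-convexity estimate you need is the CAT(0) comparison inequality $d^{2}(x_n,\gamma(t))\leq(1-t)d^{2}(x_n,\gamma(0))+t\,d^{2}(x_n,\gamma(1))-t(1-t)d^{2}(\gamma(0),\gamma(1))$, which comes from the comparison triangle of Lemma \ref{lemma 2.2} rather than from \eqref{eq0.5} itself (that is the law-of-cosines form); and the direction of the sum rule you invoke ($0\in\partial\Phi_n(x_{n+1})$ implies $\lambda_n\exp^{-1}_{x_{n+1}}x_n\in\partial f(x_{n+1})$) is most cleanly justified by exactly the limiting argument along the geodesic from $x_{n+1}$ to $y$ that you sketch as your fallback, so that alternative is in fact the load-bearing step rather than an optional detour.
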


\begin{definition}\label{definition 1.3}(~\cite{Fer1})
Let $X$ be a complete metric space and let $C \in X$ be a nonempty set.
A sequence $\{x_n\} \subset X$ is said to be  Fej$\acute{e}$r convergent to $C$ if, for all $ y \in C$ and $n\geq 0$, $d(x_{n+1},y) \leq d(x_n, y)$.
\end{definition}

\begin{lemma}\label{lemma 1.7}(~\cite{Li1})
Let $X$ be a complete metric space and let $C \in X$ be a nonempty set. Let $x_n \subset X$ be Fej$\acute{e}$r convergent to $C$ and suppose that any cluster point of $\{x_n\}$ belongs to C. Then $\{x_n\}$ converges to some point in  $C$.
\end{lemma}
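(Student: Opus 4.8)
The plan is to establish the lemma in three stages: first show that $\{x_n\}$ is bounded, then extract a cluster point that necessarily lies in $C$, and finally upgrade subsequential convergence to convergence of the entire sequence by invoking Fej$\acute{e}$r monotonicity with respect to that particular cluster point. The crux is the observation that, once a cluster point is known to belong to $C$, the Fej$\acute{e}$r property can be applied \emph{with that point itself} as the reference, which forces all the distances to collapse to zero.

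First I would fix an arbitrary $y \in C$, which is possible since $C \neq \emptyset$. By the definition of Fej$\acute{e}$r convergence we have $d(x_{n+1}, y) \leq d(x_n, y)$ for every $n \geq 0$, so the real sequence $\{d(x_n, y)\}$ is nonincreasing and bounded below by $0$; hence it converges and, in particular, $d(x_n, y) \leq d(x_0, y)$ for all $n$. Thus $\{x_n\}$ lies in the closed ball of radius $d(x_0, y)$ centered at $y$, and therefore $\{x_n\}$ is bounded.

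Next I would produce a cluster point of $\{x_n\}$ lying in $C$ and then prove convergence. Since bounded closed subsets of a Hadamard manifold are compact by the Hopf-Rinow theorem, the bounded sequence $\{x_n\}$ admits a subsequence $\{x_{n_k}\}$ converging to some $\bar{x} \in \mathcal{M}$; being a cluster point, $\bar{x} \in C$ by hypothesis. Because $\bar{x} \in C$, the Fej$\acute{e}$r property applies with reference point $\bar{x}$, so $\{d(x_n, \bar{x})\}$ is nonincreasing and converges to some $\ell \geq 0$. Along the subsequence, $x_{n_k} \to \bar{x}$ gives $d(x_{n_k}, \bar{x}) \to 0$; since a convergent real sequence agrees with the limit of any of its subsequences, we conclude $\ell = 0$. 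Hence $d(x_n, \bar{x}) \to 0$, i.e. $\{x_n\}$ converges to $\bar{x} \in C$.

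The main obstacle is the existence of a cluster point: boundedness of $\{x_n\}$ by itself does not force a convergent subsequence in an arbitrary complete metric space, so this step genuinely relies on the compactness of bounded closed sets provided by the (finite-dimensional, complete, locally compact) Hadamard structure. Once such a cluster point is secured and placed in $C$, the remainder is the short and routine argument collapsing the monotone limit $\ell$ to $0$, after which the convergence of the whole sequence is immediate.
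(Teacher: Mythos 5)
Your core argument is the standard one, and it coincides with the proof in the source the paper cites for this lemma (\cite{Li1}; the paper itself gives no proof): boundedness follows from Fej\'{e}r monotonicity with any fixed $y\in C$, and once a cluster point $\bar{x}\in C$ is in hand, re-applying the Fej\'{e}r property with $\bar{x}$ itself as the reference point makes the nonincreasing sequence $d(x_n,\bar{x})$ have a subsequence tending to $0$, hence tend to $0$. That collapsing step is exactly right and is the heart of the matter.

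The one point to flag is your cluster-point extraction. The lemma as stated is for an \emph{arbitrary} complete metric space $X$, and there your appeal to Hopf--Rinow is unavailable: bounded sequences in a general complete metric space need not have convergent subsequences, so compactness of bounded closed sets is a step outside the stated hypotheses --- as you yourself observe. In fact, as literally written the lemma is false in that generality: in $X=\ell^2$ take $C=\{0\}$ and $x_n=e_n$ an orthonormal sequence; then $d(x_{n+1},0)=d(x_n,0)=1$, so $\{x_n\}$ is Fej\'{e}r convergent to $C$, it has no cluster points at all, so the hypothesis ``every cluster point belongs to $C$'' holds vacuously, yet $\{x_n\}$ does not converge. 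The original statement in \cite{Li1} avoids this by phrasing the conclusion conditionally on a cluster point existing (``if a cluster point $x$ of $\{x_n\}$ belongs to $C$, then $x_n\to x$''), which is precisely what your final step proves; and in this paper's application $X=\mathcal{M}$ is a finite-dimensional Hadamard manifold, where bounded closed sets are compact and your extraction is legitimate. So your proof is correct in the setting where the lemma is actually used, and correct for the lemma as properly stated, but you should recognize the compactness step as repairing a defect in the statement rather than as a consequence of its stated hypotheses.
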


\section{The explicit extragradient algorithm for the equilibrium problem}

In this section, we introduce an extragradient algorithm involving pseudomonotone for equilibrium problem \eqref{EP} on Hadamard manifolds.
Unlike existing extragradient-like methods for problem \eqref{EP}, the stepsizes used in the presented algorithm are independent of the Lipschitz-type constants.
From now, let $C$ be a nonempty closed convex set of $\mathcal{M}$. Next, let $f: C \times C \rightarrow \mathbb{R}$ be a bifunction.
In order to obtain the convergence of Algorithm \ref{algorithm 0.1}, we make the following hypothesizes regarding the bifunction:

\begin{enumerate}[(C1)]
\item  $f$ is pseudomontone on $C$ and $f(x,x)=0$ for all $x \in C$;\label{C1}
\item  $f$ satisfies the Lipschitz-type condition; \label{C2}
\item  $f(x, \cdot)$ is convex and lower semicontinuous on $C$ for all $x \in C$; \label{C3}
\item  $\limsup_{n\rightarrow \infty} f(x_n,y)\leq f(x,y)$ for each $y \in C$ and each $\{x_n\} \subset C$ with $x_n \rightharpoonup x$. \label{C4}
\end{enumerate}
For the sake of simplicity in the presentation, we will use the notation $[t]_+=\max\{0,t\}$
and adopt the conventions $\frac{0}{0}=+\infty$ and $\frac{a}{0}= +\infty$ $(a\neq 0)$.
More precisely, the algorithm is described as follows:

\begin{algorithm}[h]
\caption{(An explicit extragradient algorithm for pseudomonotone EPs)}

\begin{enumerate}[]\label{algorithm 0.1}
\item \textbf{Initialization: } Choose $x_0 \in  C$ and $\lambda_0 > 0, \mu \in (0, 1)$.
\item \textbf{Iterative Steps:} Given the current iterate $x_n\in C$ and $\lambda_n (n\geq 0)$, calculate $x_{n+1}, \lambda_{n+1}$ as follows.
\item   Compute
\[
\begin{aligned}
\left\{
     \begin{array}{ll}
        y_n=\arg\min_{y\in C} \left\{f(x_n,y)+\frac{1}{2\lambda_n}d^2(x_n,y) \right\} ,\\
        x_{n+1}=\arg\min_{y\in C} \left\{f(y_n,y)+\frac{1}{2\lambda_n}d^2(x_n,y) \right\},
        \end{array}
    \right.
\end{aligned}
\]
\item    and set
\[
 \lambda_{n+1}=\min \left\{ \lambda_n, \frac{\mu(d^2(x_n,y_n)+d^2(x_{n+1},y_n))}{2[f(x_n,x_{n+1})-f(x_n,y_n)-f(y_n,x_{n+1})]_+} \right\}.
\]
\item \textbf{Stopping Criterion:} If $y_n = x_n$, then stop and $x_n$ is the solution of equilibrium problem \eqref{EP}.
\end{enumerate}
\end{algorithm}

\begin{remark}\label{remark 1.0}
Under hypothesis (C\ref{C2}), we see that there exist  constants $\gamma_1 > 0, \gamma_2 > 0$ such that
\[
\begin{aligned}
f(x_n,x_{n+1})-f(x_n,y_n)-f(y_n,x_{n+1})&\leq \gamma_1 d^2(x_n,y_n)+\gamma_2 d^2(x_{n+1},y_n)\\
                                        &\leq \max \{\gamma_1,\gamma_2\} (d^2(x_n,y_n)+d^2(x_{n+1},y_n)).
\end{aligned}
\]
Thus, from the definition of $\{\lambda_n\}$, we see that this sequence is bounded from below
by $\left\{\lambda_0, \frac{\mu}{2\max \{\gamma_1,\gamma_2\}} \right\}$. Moreover,  $\{\lambda_n\}$ is non-increasing monotone. Therefore,
there exists a real number $\lambda > 0$ such that $ \lim_{n \rightarrow \infty} \lambda_n = \lambda $. In fact, from the definition of
$\{\lambda_{n+1}\}$, if $ f(x_n,x_{n+1})-f(x_n,y_n)-f(y_n,x_{n+1}) \leq 0$, then $\lambda_{n+1} := \lambda_n$.
\end{remark}

We are now turn to the main result regarding the convergence of the proposed algorithm.

\begin{theorem}\label{theorem 3.2}
Assume that  bifunction $f$ satisfies (C\ref{C1})-(C\ref{C4}).
Then, the sequences $\{x_n\}$ generated by Algorithm \ref{algorithm 0.1} converges to a solution of equilibrium problem \eqref{EP}.
\end{theorem}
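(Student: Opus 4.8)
The plan is to fix a solution $x^{*}\in EP(f,C)$ (nonempty by hypothesis), extract from the two proximal subproblems a pair of variational inequalities, and combine them with the comparison inequality \eqref{eq0.5} so as to produce a quasi-Fej\'er estimate
\[
d^2(x_{n+1},x^{*})\le d^2(x_n,x^{*})-\Big(1-\tfrac{\mu\lambda_n}{\lambda_{n+1}}\Big)\big(d^2(x_n,y_n)+d^2(y_n,x_{n+1})\big).
\]
First I would record the optimality conditions. Since $y_n$ minimizes the convex function $y\mapsto f(x_n,y)+\frac{1}{2\lambda_n}d^2(x_n,y)$ over $C$, the subdifferential characterization of the proximal step (adapting Lemma \ref{lemma 1.5}), combined with the normal-cone inequality on $C$ and the subgradient inequality of Definition \ref{definition 1.12} for $f(x_n,\cdot)$, yields
\[
\langle \exp^{-1}_{y_n}x_n,\exp^{-1}_{y_n}y\rangle\le \lambda_n\big(f(x_n,y)-f(x_n,y_n)\big),\qquad\forall y\in C,
\]
and, reasoning identically for the subproblem defining $x_{n+1}$,
\[
\langle \exp^{-1}_{x_{n+1}}x_n,\exp^{-1}_{x_{n+1}}y\rangle\le \lambda_n\big(f(y_n,y)-f(y_n,x_{n+1})\big),\qquad\forall y\in C.
\]

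Next I would derive the displayed estimate. Applying \eqref{eq0.5} to the geodesic triangle $(x_n,x_{n+1},x^{*})$ gives $d^2(x_{n+1},x^{*})\le d^2(x_n,x^{*})-d^2(x_n,x_{n+1})+2\langle\exp^{-1}_{x_{n+1}}x_n,\exp^{-1}_{x_{n+1}}x^{*}\rangle$. I then put $y=x^{*}$ in the second variational inequality; since $x^{*}\in EP(f,C)$ we have $f(x^{*},y_n)\ge 0$, so pseudomonotonicity (C\ref{C1}) forces $f(y_n,x^{*})\le 0$ and the cross term is bounded by $-2\lambda_n f(y_n,x_{n+1})$. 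The remaining quantity $-2\lambda_n f(y_n,x_{n+1})$ I would control from above using the stepsize rule, which guarantees (for every $n$, trivially when the bracket is nonpositive)
\[
f(x_n,x_{n+1})-f(x_n,y_n)-f(y_n,x_{n+1})\le \tfrac{\mu}{2\lambda_{n+1}}\big(d^2(x_n,y_n)+d^2(x_{n+1},y_n)\big),
\]
together with the first variational inequality at $y=x_{n+1}$ (to replace $f(x_n,x_{n+1})-f(x_n,y_n)$ by an inner product) and a second use of \eqref{eq0.5} on the triangle $(x_n,y_n,x_{n+1})$. After the terms $\pm d^2(x_n,x_{n+1})$ cancel, exactly the displayed inequality emerges. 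By Remark \ref{remark 1.0} the stepsizes are nonincreasing with $\lambda_n\to\lambda>0$, so $\mu\lambda_n/\lambda_{n+1}\to\mu<1$ and the bracketed coefficient is bounded below by a positive constant $1-\bar\mu$ for all large $n$.

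From this estimate $\{d(x_n,x^{*})\}$ is eventually nonincreasing for each $x^{*}\in EP(f,C)$, so $\{x_n\}$ is Fej\'er convergent to $EP(f,C)$ and bounded; telescoping gives $\sum_n\big(d^2(x_n,y_n)+d^2(y_n,x_{n+1})\big)<\infty$, whence $d(x_n,y_n)\to 0$ and $d(y_n,x_{n+1})\to 0$. By Hopf--Rinow boundedness yields a cluster point $\bar x=\lim_k x_{n_k}$, and $d(x_n,y_n)\to 0$ forces $y_{n_k}\to\bar x$. I would then pass to the limit in the first variational inequality: the inner-product term tends to $0$ because its norm is dominated by $d(x_{n_k},y_{n_k})\to 0$ (with $\lambda_{n_k}\to\lambda>0$ and the continuity of the inverse-exponential data from Lemma \ref{lemma 1.1}), and (C\ref{C4}) together with (C\ref{C3}) lets one conclude $f(\bar x,y)\ge 0$ for every $y\in C$, i.e.\ $\bar x\in EP(f,C)$. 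Finally Lemma \ref{lemma 1.7} upgrades Fej\'er convergence with every cluster point in $EP(f,C)$ to convergence of the whole sequence $\{x_n\}$ to a point of $EP(f,C)$.

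The hard part is precisely this last cluster-point step. The difficulty is that (C\ref{C4}) controls the first argument only from above, while in the limiting inequality the term $f(x_{n_k},y_{n_k})$ carries a \emph{moving} second argument; the argmin test point $y=x_n$ only gives $f(x_n,y_n)\le 0$, which is the wrong direction for forcing $f(\bar x,y)\ge 0$. I expect to resolve this either by proving the gap $f(x_n,y_n)\to 0$, or, more robustly, by a Minty-type geodesic argument: one shows $\bar x$ is a Minty point and then, for arbitrary $y\in C$, uses convexity of $f(\gamma(t),\cdot)$ along the geodesic $\gamma$ from $\bar x$ to $y$ with $f(\gamma(t),\gamma(t))=0$, letting $t\to 0^{+}$ and invoking (C\ref{C4}) to deduce $f(\bar x,y)\ge 0$.
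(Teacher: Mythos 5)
Your Fej\'er half is essentially the paper's own argument: the same two proximal variational inequalities obtained from Lemma \ref{lemma 1.5} and the subgradient inequality, the same use of the stepsize rule to bound $f(x_n,x_{n+1})-f(x_n,y_n)-f(y_n,x_{n+1})$, the same two applications of \eqref{eq0.5} to the triangles $(x_{n+1},x_n,y)$ and $(x_n,x_{n+1},y_n)$, leading to the estimate \eqref{eq1.9}, boundedness of $\{x_n\}$, $d(x_n,y_n)\to 0$, $d(y_n,x_{n+1})\to 0$, $d(x_n,x_{n+1})\to 0$, and the final appeal to Lemma \ref{lemma 1.7}. Up to the cluster-point step the proposal is correct and matches the paper.

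The genuine gap is that cluster-point step, which you yourself flag as unresolved. The route you primarily pursue --- passing to the limit in the variational inequality for $y_n$ --- cannot work as stated: it gives $f(\bar x,y)\ge \limsup_k f(x_{n_k},y)\ge \liminf_k f(x_{n_k},y_{n_k})$ up to vanishing terms, and the argmin property only yields $f(x_n,y_n)\le -d^2(x_n,y_n)/\lambda_n\le 0$, the wrong sign, exactly as you observe. Your fallbacks are not carried out either: in the Minty route the assertion that $\bar x$ is a Minty point is itself unproven (pseudomonotonicity produces Minty inequalities only at points already known to solve \eqref{EP}, which is circular), and no mechanism is offered for bounding $f(x_n,y_n)$ from below. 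The missing idea --- and the paper's actual resolution --- is that you specialized the master inequality to $y=x^*$ too early. Keeping it for arbitrary $y\in C$, i.e., \eqref{eq1.8},
\[
d^2(x_{n+1},y)\le d^2(x_n,y)-\Bigl(1-\tfrac{\mu\lambda_n}{\lambda_{n+1}}\Bigr)\bigl(d^2(x_n,y_n)+d^2(x_{n+1},y_n)\bigr)+2\lambda_n f(y_n,y),
\]
yields the lower bound $2\lambda_{n_k} f(y_{n_k},y)\ge d^2(x_{n_k+1},y)-d^2(x_{n_k},y)+o(1)$, and the right-hand side tends to $0$ since $d(x_{n_k},x_{n_k+1})\to 0$ and $\{x_n\}$ is bounded (the paper's \eqref{eq1.15}). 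Here the moving point $y_{n_k}\to x^*$ sits in the \emph{first} argument of $f$ with $y$ fixed, which is precisely the configuration that (C\ref{C4}) controls: $f(x^*,y)\ge\limsup_k f(y_{n_k},y)\ge 0$ for all $y\in C$, so $x^*\in EP(f,C)$ with no Minty argument and no control of $f(x_{n_k},y_{n_k})$ needed. With that substitution for your last step, the rest of your proof goes through verbatim.
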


\begin{proof}
From definition of $x_{n+1}$ and Lemma \ref{lemma 1.5}, one obtains
\begin{equation}\label{eq1.1}
\begin{aligned}
\langle \exp^{-1}_{x_{n+1}} x_n, \exp^{-1}_y x_{n+1} \rangle \geq \lambda_n f(y_n,x_{n+1})-\lambda_n f(y_n,y), \quad \forall y\in C.
\end{aligned}
\end{equation}
From the definition of $\lambda_{n+1}$, one concludes
\[
f(x_n,x_{n+1})-f(x_n,y_n)-f(y_n,x_{n+1})\leq \frac{\mu(d^2(x_n,y_n)+d^2(x_{n+1},y_n))}{2\lambda_{n+1}}.
\]
Since $\lambda_n >0$, one can write the above inequality as
\begin{equation}\label{eq1.2}
\begin{aligned}
\lambda_n f(y_n,x_{n+1})\geq \lambda_n(f(x_n,x_{n+1})-f(x_n,y_n))-\frac{\mu \lambda_n(d^2(x_n,y_n)+d^2(x_{n+1},y_n))}{2\lambda_{n+1}}.
\end{aligned}
\end{equation}
Combining \eqref{eq1.2} through \eqref{eq1.1} yields that
\begin{equation}\label{eq1.3}
\begin{aligned}
\langle \exp^{-1}_{x_{n+1}} x_n, \exp^{-1}_y x_{n+1} \rangle \geq &\lambda_n(f(x_n,x_{n+1})-f(x_n,y_n))-\frac{\mu \lambda_n}{2\lambda_{n+1}}d^2(x_n,y_n)\\
&-\frac{\mu \lambda_n}{2\lambda_{n+1}}d^2(x_{n+1},y_n)-\lambda_nf(y_n,y).
\end{aligned}
\end{equation}
It also follows from the definition of $y_n$ and Lemma \ref{lemma 1.5} that
\begin{equation}\label{eq1.4}
\begin{aligned}
\lambda_n (f(x_n,x_{n+1})-f(x_n,y_n)) \geq \langle \exp^{-1}_{y_n}x_n, \exp^{-1}_{y_n}{x_{n+1}}\rangle.
\end{aligned}
\end{equation}
From the relations \eqref{eq1.3} and \eqref{eq1.4}, one obtains
\begin{equation}\label{eq1.5}
\begin{aligned}
2\langle \exp^{-1}_{x_{n+1}} x_n, \exp^{-1}_y x_{n+1} \rangle \geq & 2\langle \exp^{-1}_{y_n}x_n, \exp^{-1}_{y_n}{x_{n+1}} \rangle-\frac{\mu \lambda_n}{\lambda_{n+1}}d^2(x_n,y_n)\\
&-\frac{\mu \lambda_n}{\lambda_{n+1}}d^2(x_{n+1},y_n)-2\lambda_nf(y_n,y).
\end{aligned}
\end{equation}
Let $\Delta(x_{n+1},x_n,y) \subseteq \mathcal{M}$ be the geodesic triangle and using \eqref{eq0.5}, it follows that
\begin{equation}\label{eq1.6}
\begin{aligned}
2\langle \exp^{-1}_{x_{n+1}} x_n, \exp^{-1}_{x_{n+1}} y \rangle \geq d^2(x_n,x_{n+1})+d^2(x_{n+1},y)-d^2(x_n,y).
\end{aligned}
\end{equation}
Similarly, let $\Delta(x_n,x_{n+1},y_n) \subseteq \mathcal{M}$ be the geodesic triangle and using \eqref{eq0.5}, then
\begin{equation}\label{eq1.7}
\begin{aligned}
2\langle \exp^{-1}_{y_n}x_n, \exp^{-1}_{y_n}{x_{n+1}} \rangle \geq d^2(y_n,x_n)+d^2(y_n,x_{n+1})-d^2(x_n,x_{n+1}).
\end{aligned}
\end{equation}
Combining the relations \eqref{eq1.5}–\eqref{eq1.7}, one arrives at
\begin{equation}\label{eq1.8}
\begin{aligned}
d^2(x_{n+1},y)\leq &d^2(x_n,y)-(1-\frac{\mu \lambda_n}{\lambda_{n+1}})d^2(x_n,y_n)\\
&-(1-\frac{\mu \lambda_n}{\lambda_{n+1}})d^2(x_{n+1},y_n)+2 \lambda_nf(y_n,y).
\end{aligned}
\end{equation}
Taking $p \in EP(f,C)$, we have that $f(p,y_n)\geq 0$. It follows from the pseudomonotonicity of $f$ that $f(y_n,p)\leq 0$. Then, using $y=p \in C$ in \eqref{eq1.8}, we get
\begin{equation}\label{eq1.9}
\begin{aligned}
d^2(x_{n+1},p)\leq &d^2(x_n,p)-(1-\frac{\mu \lambda_n}{\lambda_{n+1}})d^2(x_n,y_n)\\
&-(1-\frac{\mu \lambda_n}{\lambda_{n+1}})d^2(x_{n+1},y_n).
\end{aligned}
\end{equation}
Let $\kappa$ be fixed in $(0,1-\mu)$. Since $\lim_{n\rightarrow \infty}\lambda_n=\lambda >0$, one asserts that
\[
\lim_{n \rightarrow \infty}(1-\frac{\mu \lambda_n}{\lambda_{n+1}})=1-\mu > \kappa >0.
\]
Thus, there exists $n_0\geq 0$ such that, for all $n \geq n_0$,
\begin{equation}\label{eq1.10}
\begin{aligned}
1-\frac{\mu\lambda_n}{\lambda_{n+1}}> \kappa >0.
\end{aligned}
\end{equation}
Adding \eqref{eq1.10} into \eqref{eq1.9}, one obtains
\[
d^2(x_{n+1},p)\leq d^2(x_n,p)-\kappa(d^2(x_n,y_n)+d^2(x_{n+1},y_n)),
\]
which implies that
\begin{equation}\label{eq1.11}
\begin{aligned}
a_{n+1}\leq a_n-b_n,
\end{aligned}
\end{equation}
where
\[
\begin{aligned}
a_n=d^2(x_n,p) \mbox{ and } b_n=\kappa(d^2(x_n,y_n)+d^2(x_{n+1},y_n)).
\end{aligned}
\]
It is obvious that $\lim_{n \rightarrow \infty} a_n$ exists, and $\lim_{n \rightarrow \infty} b_n=0 $. Hence $\{x_n\}$ is bounded.
Thus, we conclude from the definition of $b_n$ that
\begin{equation}\label{eq1.12}
\begin{aligned}
\lim_{n\rightarrow \infty}d^2(x_n,y_n)=\lim_{n\rightarrow \infty}d^2(x_{n+1},y_n)=0,
\end{aligned}
\end{equation}
which implies from the boundedness of $\{x_n\}$ that $\{y_n\}$ is bounded. Using \eqref{eq0.5}, we obtain
\[
d^2(x_n,x_{n+1})\leq 2\langle \exp^{-1}_{x_{n+1}}x_n,\exp^{-1}_{x_{n+1}}y_n \rangle-d^2(x_{n+1},y_n)+d^2(x_n,y_n).
\]
We also have
\begin{equation}\label{eq1.13}
\begin{aligned}
\lim_{n\rightarrow \infty}d^2(x_n,x_{n+1})=0.
\end{aligned}
\end{equation}
We next prove that each weak cluster point of $\{x_n\}$ is in $EP(f, C)$.
We show that $\{x_n\}$ is bounded. Therefore there exists a subsequence $\{x_{n_k}\}$ of $\{x_n\}$ and $x^*\in C $ such that $x^*$ is a weak cluster point of $\{x_n\}$, i.e., $x_{n_k}\rightharpoonup x^*$. Hence, by using \eqref{eq1.12}, we have that  $y_{n_k}\rightharpoonup x^*$.
Replacing $n$ by $n_k$ in \eqref{eq1.8}, and taking $\limsup$ and using hypothesis (C\ref{C4}), we have
\begin{equation}\label{eq1.14}
\begin{aligned}
f(x^*,y)\geq\limsup_{k\rightarrow \infty}f(y_{n_k},y)\geq\frac{1}{2\lambda}\limsup_{k\rightarrow \infty} (d^2(x_{n_k+1},y)-d^2(x_{n_k},y)),\quad  \forall y\in C.
\end{aligned}
\end{equation}
On the other hand, from \eqref{eq0.5}, we obtain
\[
d^2(x_{n_k+1},y)-d^2(x_{n_k},y)\leq \langle \exp^{-1}_{x_{n_k+1}}x_{n_k}, \exp^{-1}_{x_{n_k+1}}y \rangle-d^2(x_{n_k+1},x_{n_k}).
\]
This together with \eqref{eq1.13} implies that
\begin{equation}\label{eq1.15}
\begin{aligned}
\lim_{k\rightarrow \infty}(d^2(x_{n_k+1},y)-d^2(x_{n_k},y))=0.
\end{aligned}
\end{equation}
Combining \eqref{eq1.14} and \eqref{eq1.15}, we get $f(x^*,y)\geq\limsup_{k\rightarrow \infty}f(y_{n_k},y)\geq 0$, for all $y\in C$. Therefore, $x^*~ \in ~EP(f, C)$.
From \eqref{eq1.9}, \eqref{eq1.10} and Definition \ref{definition 1.3}, we know that $\{x_n\}$ is Fej$\acute{e}$r convergent to $C$. Finally, Lemma \ref{lemma 1.7} implies that   sequence $\{x_n\}$ converges to a point of $EP(f, C)$. This completes the proof.
\qed
\end{proof}

\section{The R-linear rate of the convergence}

Algorithms in \cite{Kha1} have some special advantages that they are done without the prior knowledge of the Lipschitz-type constants of the bifunction.
However, in the case that  bifunction $f$ is strongly pseudomonotone (SP), the linear rate of convergence cannot be obtained for these algorithms.
In this section, we will establish the $R$-linear rate of the convergence of Algorithms \ref{algorithm 0.1}  under   hypothesis (SP) and (C\ref{C1})-(C\ref{C4}).
Under these assumptions, equilibrium problem \eqref{EP} has the unique solution, denoted by $\bar{x}$.
The rate of the convergence of the proposed algorithm is ensured by the following theorem.

\begin{theorem}\label{theorem 3.3}
Under   hypotheses (C\ref{C1})-(C\ref{C4}) and (SP), the sequence $\{x_n\}$ generated by
Algorithm \ref{algorithm 0.1} converges $R$-linearly to the unique solution $\bar{x}$ of equilibrium problem \eqref{EP}.
\end{theorem}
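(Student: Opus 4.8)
The plan is to reuse the fundamental descent inequality \eqref{eq1.8} established in the proof of Theorem \ref{theorem 3.2}, but now to exploit the strong pseudomonotonicity to manufacture an extra negative term that can be traded back against $d^2(x_{n+1},\bar{x})$. First I would specialize \eqref{eq1.8} to $y=\bar{x}$, the unique solution. Since $\bar{x}\in EP(f,C)$ gives $f(\bar{x},y_n)\geq 0$, hypothesis (SP) yields $f(y_n,\bar{x})\leq -\gamma d^2(y_n,\bar{x})$, so the residual term $2\lambda_n f(y_n,\bar{x})$ in \eqref{eq1.8} is bounded above by $-2\lambda_n\gamma d^2(y_n,\bar{x})$. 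Combining this with \eqref{eq1.10}, which guarantees $1-\mu\lambda_n/\lambda_{n+1}>\kappa>0$ for $n\geq n_0$, and discarding the harmless term $-\kappa d^2(x_n,y_n)$, I obtain for all $n\geq n_0$ the refined estimate
\begin{equation*}
d^2(x_{n+1},\bar{x})\leq d^2(x_n,\bar{x})-\kappa d^2(x_{n+1},y_n)-2\lambda_n\gamma d^2(y_n,\bar{x}).
\end{equation*}

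The decisive step is to convert the two negative terms on the right into a multiple of $d^2(x_{n+1},\bar{x})$. By Remark \ref{remark 1.0} the stepsizes are non-increasing with $\lambda_n\to\lambda>0$, hence $\lambda_n\geq\lambda$ for every $n$; setting $\eta:=\min\{\kappa,2\lambda\gamma\}>0$, I can bound
\begin{equation*}
\kappa d^2(x_{n+1},y_n)+2\lambda_n\gamma d^2(y_n,\bar{x})\geq \eta\big(d^2(x_{n+1},y_n)+d^2(y_n,\bar{x})\big).
\end{equation*}
Applying the triangle inequality $d(x_{n+1},\bar{x})\leq d(x_{n+1},y_n)+d(y_n,\bar{x})$ together with $(a+b)^2\leq 2(a^2+b^2)$ gives $d^2(x_{n+1},y_n)+d^2(y_n,\bar{x})\geq\tfrac12 d^2(x_{n+1},\bar{x})$, whence $d^2(x_{n+1},\bar{x})\leq d^2(x_n,\bar{x})-\tfrac{\eta}{2}d^2(x_{n+1},\bar{x})$. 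Rearranging produces the contraction
\begin{equation*}
d^2(x_{n+1},\bar{x})\leq \rho\, d^2(x_n,\bar{x}),\qquad \rho:=\frac{2}{2+\eta}\in(0,1),\quad n\geq n_0.
\end{equation*}

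Finally I would iterate this contraction to get $d^2(x_n,\bar{x})\leq \rho^{\,n-n_0}d^2(x_{n_0},\bar{x})$ for $n\geq n_0$, that is $d(x_n,\bar{x})\leq C\rho^{\,n/2}$ with $C:=\rho^{-n_0/2}d(x_{n_0},\bar{x})$, which is exactly $R$-linear convergence to $\bar{x}$. The uniqueness of the solution under (SP) is already recorded before the statement, so the limit point identified in Theorem \ref{theorem 3.2} necessarily coincides with $\bar{x}$.

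I expect the only genuine subtlety to be the middle step: securing a uniform positive lower bound on the coefficients so that $\rho$ is a constant independent of $n$. This hinges on $\lambda_n\geq\lambda>0$ from Remark \ref{remark 1.0} and on the fixed $\kappa$ from \eqref{eq1.10}; once both are in hand, $\eta$ and hence $\rho$ are genuine constants and the geometric decay follows at once. Everything else is routine manipulation already modeled on the proof of Theorem \ref{theorem 3.2}.
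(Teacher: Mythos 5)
Your proof is correct, and while it opens exactly as the paper does --- specialize \eqref{eq1.8} at $y=\bar{x}$, use $f(\bar{x},y_n)\geq 0$ and (SP) to replace $2\lambda_n f(y_n,\bar{x})$ by $-2\gamma\lambda_n d^2(y_n,\bar{x})$, and invoke $\lambda_n\geq\lambda>0$ from Remark~\ref{remark 1.0} --- it closes the contraction by a genuinely different decomposition. The paper (see \eqref{eq3.6}) discards $d^2(x_{n+1},y_n)$, keeps $d^2(x_n,y_n)$ and $d^2(y_n,\bar{x})$, and reconstructs $d^2(x_n,\bar{x})$ via the triangle with vertices $x_n,y_n,\bar{x}$; to supply the factor $2$ demanded by $(a+b)^2\leq 2(a^2+b^2)$ it must sharpen the stepsize threshold to $1-\frac{\mu\lambda_n}{\lambda_{n+1}}>2\kappa$ with $\kappa\in\left(0,\frac{1-\mu}{2}\right)$, and it obtains the explicit one-step estimate $d^2(x_{n+1},\bar{x})\leq r\,d^2(x_n,\bar{x})$ with $r=1-\min\{\kappa,\rho\lambda\}$, whose membership in $(0,1)$ silently relies on $\min\{\kappa,\rho\lambda\}\leq\kappa<\frac{1}{2}$. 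You instead discard $d^2(x_n,y_n)$, keep $d^2(x_{n+1},y_n)$ and $d^2(y_n,\bar{x})$, and use the triangle with vertices $x_{n+1},y_n,\bar{x}$ to manufacture $-\frac{\eta}{2}d^2(x_{n+1},\bar{x})$ on the right, which you then move to the left to get the implicit contraction $d^2(x_{n+1},\bar{x})\leq\frac{2}{2+\eta}\,d^2(x_n,\bar{x})$. This buys two small robustness advantages: the weaker threshold \eqref{eq1.10} with $\kappa\in(0,1-\mu)$ suffices (no factor-$2$ bookkeeping), and the factor $\frac{2}{2+\eta}$ lies in $(0,1)$ automatically no matter how large the constant $\eta=\min\{\kappa,2\lambda\gamma\}$ is, so no side condition needs checking; the paper's route, in exchange, yields a cleaner explicit rate constant. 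Two cosmetic remarks: both arguments actually give Q-linear decay of $d^2(x_n,\bar{x})$ (stronger than the R-linear convergence claimed), and your final appeal to Theorem~\ref{theorem 3.2} to identify the limit is unnecessary, since iterating your contraction already forces $x_n\rightarrow\bar{x}$.
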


\begin{proof}
Using the \eqref{eq1.8} with $y = \bar{x}$, we obtain
\begin{equation}\label{eq3.1}
\begin{aligned}
d^2(x_{n+1},\bar{x})\leq &d^2(x_n,\bar{x})-(1-\frac{\mu \lambda_n}{\lambda_{n+1}})d^2(x_n,y_n)\\
&-(1-\frac{\mu \lambda_n}{\lambda_{n+1}})d^2(x_{n+1},y_n)+2 \lambda_nf(y_n,\bar{x}).
\end{aligned}
\end{equation}
Since $\bar{x} \in  EP(f, C)$ we have  $f(\bar{x}, y_n) \geq 0$. From assumption (SP), we get that
\begin{equation}\label{eq3.2}
\begin{aligned}
f (y_n, \bar{x}) \leq -\rho d^2(y_n, \bar{x}),
\end{aligned}
\end{equation}
where $\rho$ is some positive real number.
Adding \eqref{eq3.2} into \eqref{eq3.1}, we have
\begin{equation}\label{eq3.3}
\begin{aligned}
d^2(x_{n+1},\bar{x})\leq &d^2(x_n,\bar{x})-(1-\frac{\mu \lambda_n}{\lambda_{n+1}})d^2(x_n,y_n)\\
&-(1-\frac{\mu \lambda_n}{\lambda_{n+1}})d^2(x_{n+1},y_n)-2 \rho \lambda_n d^2(y_n,\bar{x}).
\end{aligned}
\end{equation}
Since $\{\lambda_n\}$ is non-increasing monotone and $\lim_{n\rightarrow \infty}\lambda_n=\lambda >0$, one has that
 $\lambda_n \geq \lambda_\infty =\lambda$ for all $n \geq 0$. Then, it follows  from \eqref{eq3.3} that
\begin{equation}\label{eq3.4}
\begin{aligned}
d^2(x_{n+1},\bar{x})\leq &d^2(x_n,\bar{x})-(1-\frac{\mu \lambda_n}{\lambda_{n+1}})d^2(x_n,y_n)\\
&-(1-\frac{\mu \lambda_n}{\lambda_{n+1}})d^2(x_{n+1},y_n)-2 \rho \lambda d^2(y_n,\bar{x}).
\end{aligned}
\end{equation}
Letting $\kappa$ be fixed in $(0,\frac{1-\mu}{2})$, we find that
\[
\lim_{n \rightarrow \infty}(1-\frac{ \mu \lambda_n}{\lambda_{n+1}})=1-\mu > 2\kappa >0.
\]
Thus, there exists $n_0\geq 0$ such that, for all $n \geq n_0$,
\begin{equation}\label{eq3.5}
\begin{aligned}
1-\frac{ \mu \lambda_n}{\lambda_{n+1}}> 2 \kappa >0.
\end{aligned}
\end{equation}
It follows from \eqref{eq3.4} and \eqref{eq3.5} that, for all $n \geq n_0$,
\begin{equation}\label{eq3.6}
\begin{aligned}
d^2(x_{n+1},\bar{x}) &\leq d^2(x_n,\bar{x})-(1-\frac{\mu \lambda_n}{\lambda_{n+1}})d^2(x_n,y_n)-2 \rho \lambda d^2(y_n,\bar{x})\\
&\leq d^2(x_n,\bar{x})-2\kappa d^2(x_n,y_n)-2 \rho \lambda d^2(y_n,\bar{x})\\
&\leq d^2(x_n,\bar{x})-\min\{ \kappa, \rho \lambda\} \left\{2d^2(x_n,y_n)+2d^2(y_n,\bar{x})\right\}\\
&\leq d^2(x_n,\bar{x})-\min\{ \kappa, \rho \lambda\} d^2(x_n,\bar{x})\\
&=r d^2(x_n, \bar{x}),
\end{aligned}
\end{equation}
where $r =1-\min\{ \kappa, \rho \lambda\}\in (0,1)$. In view of \eqref{eq3.6}, one concludes that
\[
d^2(x_{n+1},\bar{x}) \leq r^{n-n_0+1}d^2(x_{n_0},\bar{x}), \quad \forall n \geq n_0,
\]
or $d^2(x_{n+1},\bar{x}) \leq M r^n$ for all $n \geq n_0$, where $M= r^{1-n_0}d^2(x_{n_0},\bar{x})$. This finishes the proof.
\qed
\end{proof}

\section{Numerical experiment}
In this section, we illustrate the convergence behavior of our proposed Algorithm \ref{algorithm 0.1} through an equilibrium problem \eqref{EP}, which is relative to a strongly pseudomonotone bifunction. We use the \emph{fmincon} function in the MATLAB Optimization toolbox to solve the optimization problem. All the programs are executed in MATLAB2018a on a PC Desktop Intel(R) Core(TM) i5-8250U CPU @ 1.60GHz 1.800 GHz, RAM 8.00 GB. MATLAB codes to reproduce the experiments are freely available at \url{https://github.com/bingtan72/Fan2020EGM4EPonHM}.

\begin{example}\label{ex1}
Form \cite[Example 1]{Ansari2019}, let $\mathbb{R}^{++}=\{x \in \mathbb{R}: x>0\}$ and $\mathcal{M}=\left(\mathbb{R}^{++},\langle\cdot, \cdot\rangle\right)$ be the Riemanian manifold with the metric $\langle m, n\rangle:=m n$. Thus, the sectional curvature of $ \mathcal{M} $ is $ 0 $. $T_{x} \mathcal{M}$ denotes the tangent space at  $x \in \mathcal{M}$, equals $\mathbb{R}$. The Riemannian distance $d: \mathcal{M} \times \mathcal{M} \rightarrow \mathbb{R}^{+}$ is defined by
$d(x, y):=|\ln (x / y)|$. Then $ \mathcal{M} $ is a Hadamard manifold. Let $\gamma:[0,1] \rightarrow \mathcal{M}$  be a geodesic starting from $x=\gamma(0)$ with velocity $v=\gamma^{\prime}(0) \in T_{x} \mathcal{M}$ defined by $ \gamma(t):=x e^{(v /x)t} $.
Hence, we get that $ \exp _{x} t v=x e^{(v /x)t} $.
For any $x, y \in \mathcal{M}$, we obtain
\[
y=\exp _{x}\left(d(x, y) \frac{\exp _{x}^{-1} y}{d(x, y)}\right)=x e^{\left(\frac{\exp _{x}^{-1} y}{x d(x, y)}\right) d(x, y)}=x e^{\frac{\exp _{x}^{-1}y}{x}},
\]
and thus, the inverse of exponential map is
$ \exp _{x}^{-1} y=x \ln \left(y / {x}\right) $.

Next, we consider an extension of a Nash-Cournot oligopolistic equilibrium model \cite{Nash} with the price function and fee-fax function being affine. Assume that there are $ n $ companies. Let $ x=(x_{1},x_{2},\ldots, x_{n}) $ be a vector, and its elements $ x_{i} $ represent the number of  goods produced by company $ i $.  We suppose that the price function $p_{i}(s)$ is a decreasing affine of  $s=\sum_{i=1}^{n} x_{i}$ such as $p_{i}(s)=a_{i}-b_{i} s$, where $a_{i}, b_{i} \geq 0$. Then the profit of the company  $ i $ is given by $f_{i}(x)=p_{i}(s) x_{i}-c_{i}\left(x_{i}\right)$, where $c_{i}\left(x_{i}\right)$ is the tax and fee for generating $ x_{i} $. Set $\Phi_{i}=\left[x_{i, \min }, x_{i, \max }\right]$ is the strategy set of the company $ i $.  Therefore, $\Phi=\Phi_{1} \times \cdots \times \Phi_{n}$ is the strategy set of the model. In fact, each company $ i $ tries to maximize its own profits by choosing the corresponding production level $ x_{i} $. The common method of this model is based on the well-known Nash equilibrium concept.

We recall that a point $x^{*}=\left(x_{1}^{*}, x_{2}^{*}, \ldots, x_{n}^{*}\right) \in \Phi=\Phi_{1} \times \cdots \times \Phi_{n}$ is called an equilibrium point of the model if
\[
f_{i}\left(x^{*}\right) \geq f_{i}\left(x^{*}\left[x_{i}\right]\right), \quad \forall x_{i} \in \Phi_{i}, \forall i=1, \ldots, n,
\]
where  $x^{*}\left[x_{i}\right]$ stands for the vector obtained from $x^{*}$ by replacing $x_{i}^{*}$ with $x_{i}$. Set  $f(x, y)=\phi(x, y)-\phi(x, x)$, where $\phi(x, y)=-\sum_{i=1}^{n} f_{i}\left(x\left[y_{i}\right]\right)$. The problem of finding a Nash equilibrium point of the model can be expressed as:
\[
\text { Find } x^{*} \in \Phi, \quad \text { such that } f\left(x^{*}, x\right) \geq 0, \quad \forall x \in \Phi.
\]
We suppose that the tax-fee function $c_{i}\left(x_{i}\right)$  is increasing and affine for every $ i $. This assumption means that as the number of products increases, the taxes and expenses for producing a unit increase. Here, the bifunction $ f $ can be expressed as $ f(x, y)=\langle C x+D y+q, y-x\rangle $, where $q \in \mathbb{R}^{n}$ and $C, D$  are two matrices of order $ n $ such that $ D $ is symmetric positive semidefinite and $D-C$ is symmetric negative semidefinite. We consider here that $D-C$ is symmetric negative definite. From the property of $D-C$, if $f(x, y) \geq 0$, we have
\[\begin{aligned}
f(y, x) & \leq f(y, x)+f(x, y) \\
&=\langle C y+D x+q, x-y\rangle+\langle C x+D y+q, y-x\rangle \\
&=\langle(C-D) y+(D-C) x, x-y\rangle \\
&=(x-y)^{\mathrm{T}}(D-C)(x-y) \\
& \leq-\delta d^{2}(x, y),
\end{aligned}\]
where $\delta>0$. Then $ f $ is strongly pseudomonotone, i.e., assumption (C1) holds for $ f $. Furthermore, it is easy to prove that $ f $ satisfies the Lipschitz-type condition, see, e.g., \cite{Hieu2016}, (C\ref{C2}) is fulfilled. Assumption (C\ref{C3}) and (C\ref{C4}) are automatically fulfilled. Hence, Algorithm \ref{algorithm 0.1}  can be applied in this case.

For the numerical experiment, we consider four companies, that are defined as follows:
\begin{table}[htbp]
	\centering
	\caption{Parameter settings for Example \ref{ex1}}
	\begin{tabular}{cllll}
		\toprule
		\multicolumn{1}{c}{Company $ i $} & \multicolumn{1}{c}{Price $ p_{i}(s) $} & \multicolumn{1}{c}{Tax $ c_{i}(x_{i}) $} & \multicolumn{1}{c}{Strategy set $\Phi_{i}$} &  \\
		\midrule
		1     & $p_{1}(s)=100-0.01 s$     &  $c_{1}\left(x_{1}\right)=20 x_{1}$     & $\Phi_{1}=[1000,2000]$       \\
		2     & $p_{2}(s)=110-0.02 s$      & $c_{2}\left(x_{2}\right)=15 x_{2}+100$      & $\Phi_{2}=[500,2500]$        \\
		3     & $p_{3}(s)=100-0.015 s$      &  $c_{3}\left(x_{3}\right)=17 x_{3}$     & $\Phi_{3}=[800,1500]$        \\
		4     &  $p_{4}(s)=115-0.05 s$     &  $c_{4}\left(x_{4}\right)=20 x_{4}+75$     & $\Phi_{4}=[500,3000]$        \\
		\bottomrule
	\end{tabular}%
	\label{tab:addlabel}%
\end{table}%

In our Algorithm \ref{algorithm 0.1}, The starting point is $x_{0}=(1000, 500, 800, 500)^{\mathrm{T}} \in \mathbb{R}^{4}$. In view of Algorithm \ref{algorithm 0.1}, we see that $y_{n}=x_{n}$, then $x_{n}$  is the solution of problem \eqref{EP}. Therefore, we use the sequence $\varepsilon_{n}=d\left(x_{n}, y_{n}\right)$ to study the convergence of the Algorithm \ref{algorithm 0.1}. The convergence of $\{\varepsilon_{n}\}$ to zero implies that sequence $ \{x_{n}\} $ converges to the solution of the problem.  Next, we show the behavior of $\{\varepsilon_{n}\}$ in Algorithm \ref{algorithm 0.1} for different initial $ \lambda_{0} $ and $ \mu $. We perform experiments for both number of iterations (\# iteration) and elapsed execution
time (Elapsed time [sec]). The numerical results are reported in Figs. \ref{fig_iter1}--\ref{fig_time2}. In these figures, the $ x $-axis represents the number of iterations or execution time, and the $ y $-axis represents the value of $\{\varepsilon_{n}\}$.
\begin{figure}[htbp]
\centering
\includegraphics[scale=0.4]{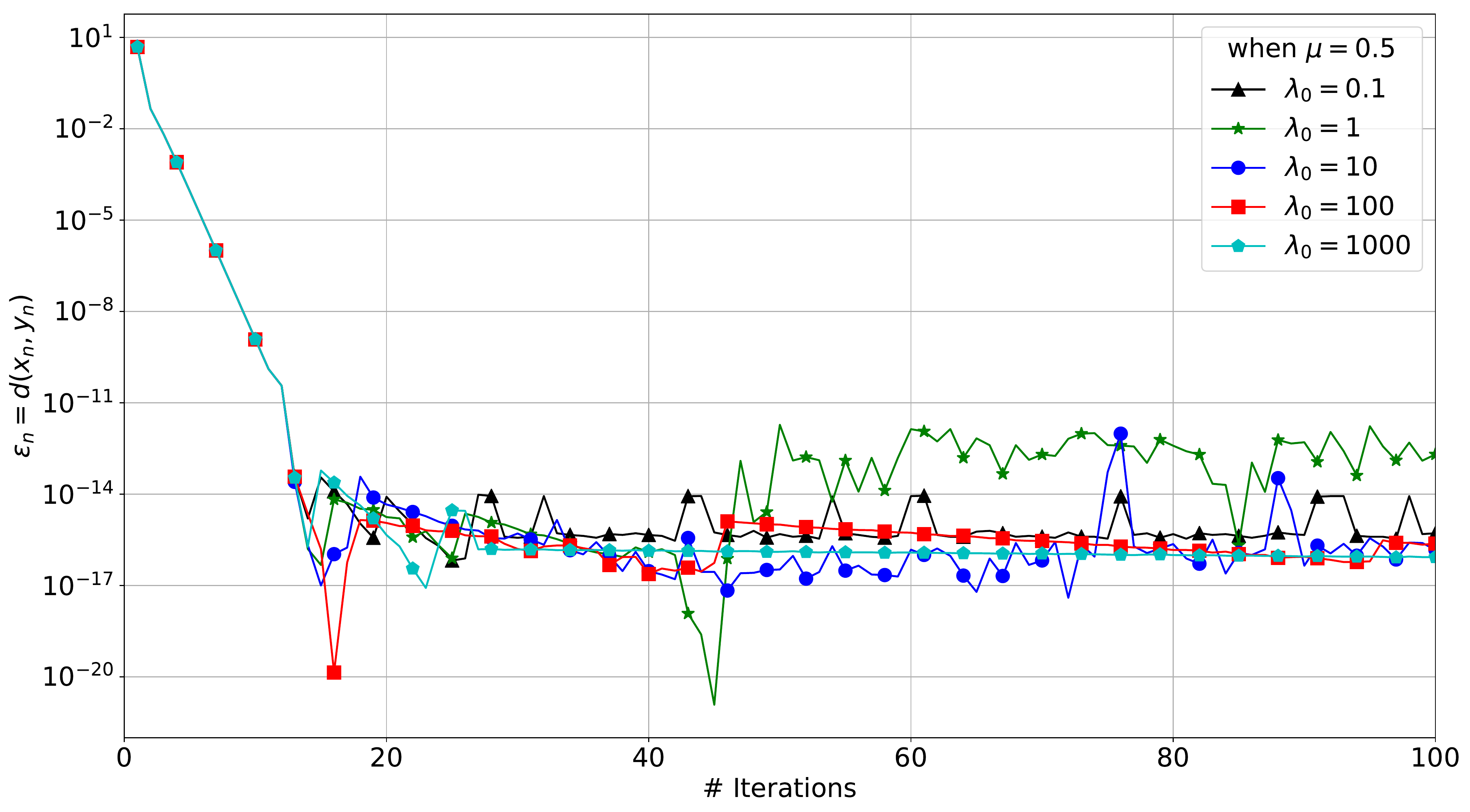}
\caption{Numerical behavior of $ \{\varepsilon_{n}\} $ in  Algorithm \ref{algorithm 0.1}  with the number of iterations}
\label{fig_iter1}
\end{figure}
\begin{figure}[htbp]
	\centering
	\includegraphics[scale=0.4]{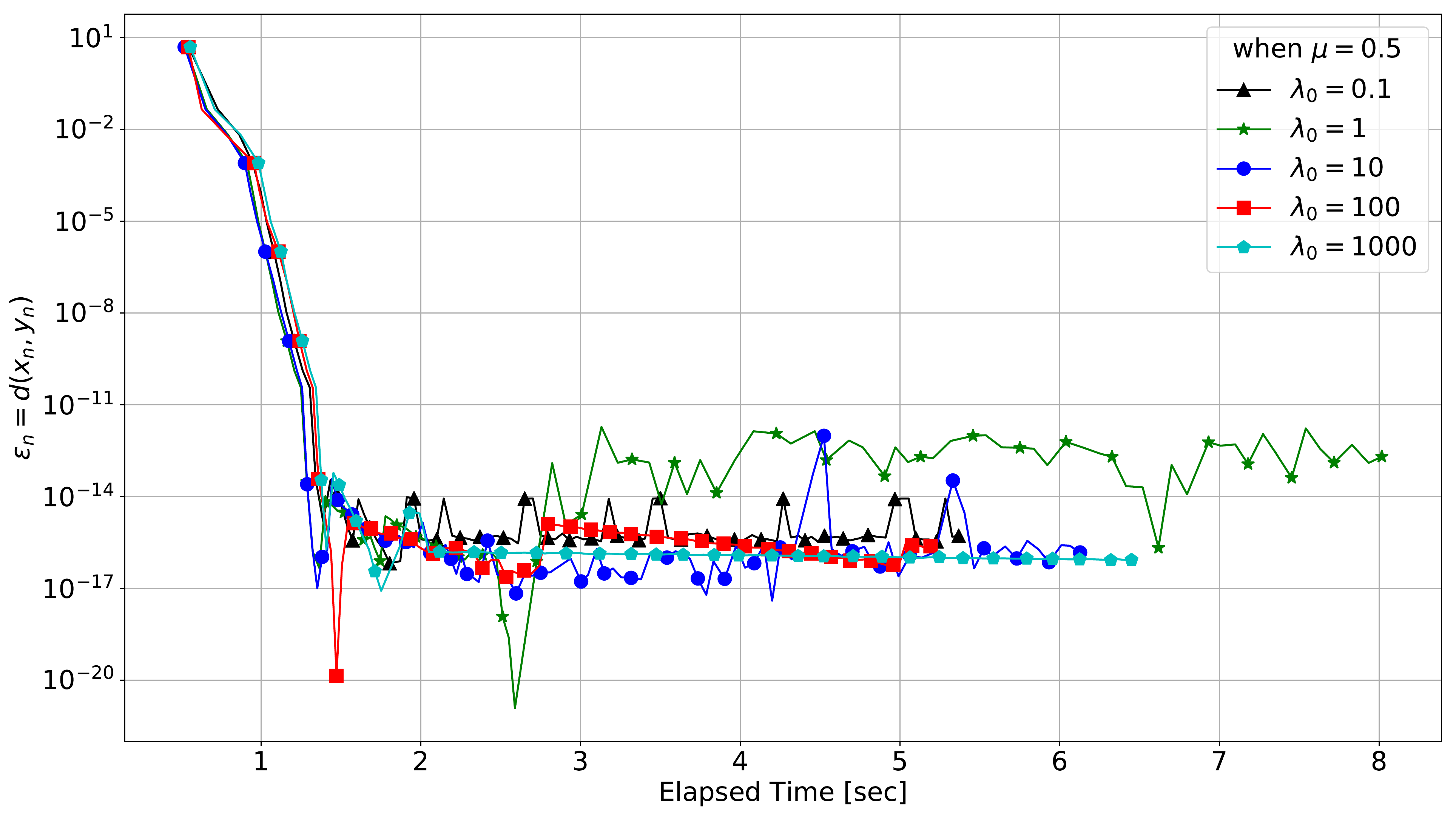}
	\caption{Numerical behavior of $ \{\varepsilon_{n}\} $ in  Algorithm \ref{algorithm 0.1}  with elapsed time}
	\label{fig_time1}
\end{figure}
\begin{figure}[htbp]
	\centering
	\includegraphics[scale=0.4]{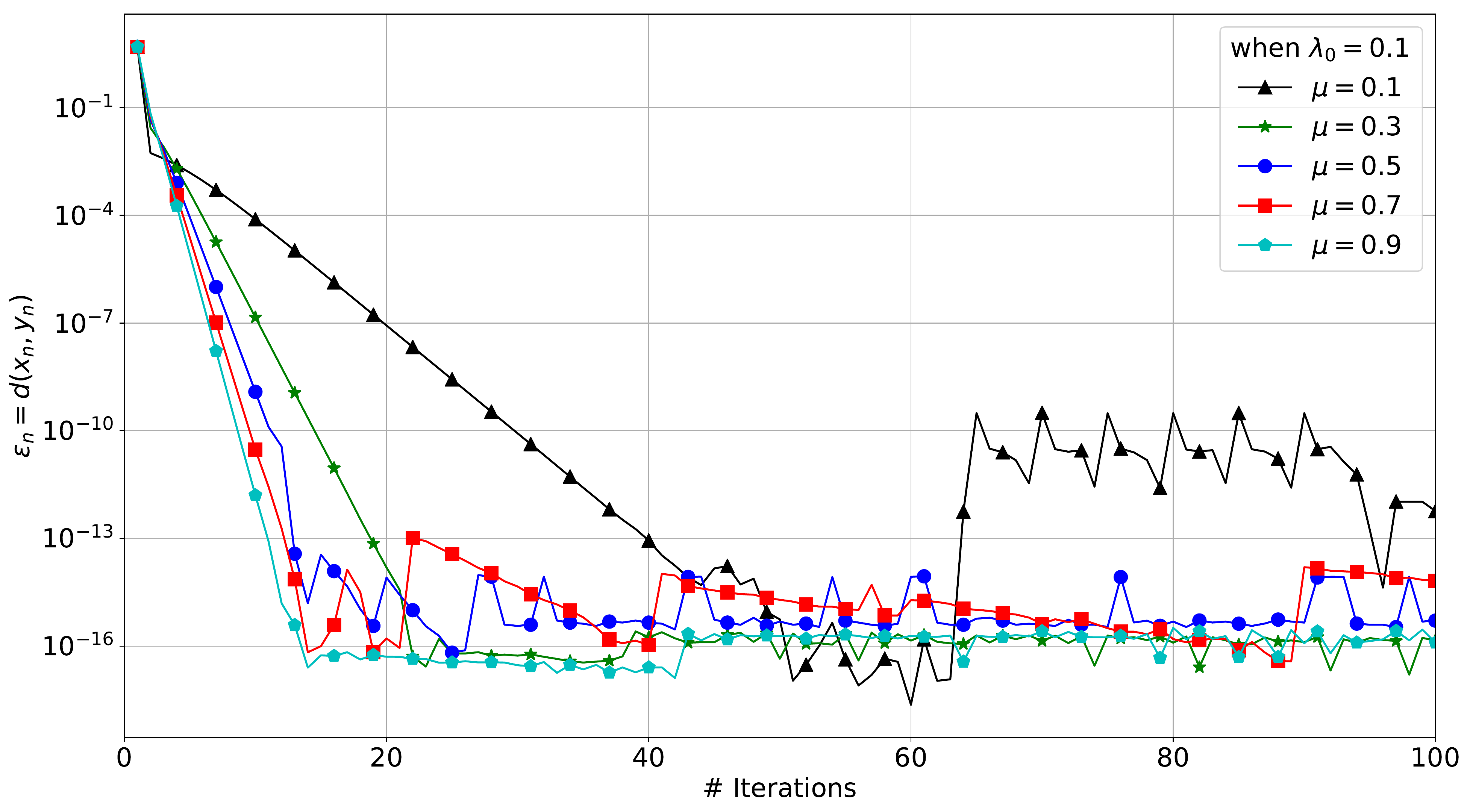}
	\caption{Numerical behavior of $ \{\varepsilon_{n}\} $ in  Algorithm \ref{algorithm 0.1}  with the number of iterations}
	\label{fig_iter2}
\end{figure}
\begin{figure}[htbp]
	\centering
	\includegraphics[scale=0.4]{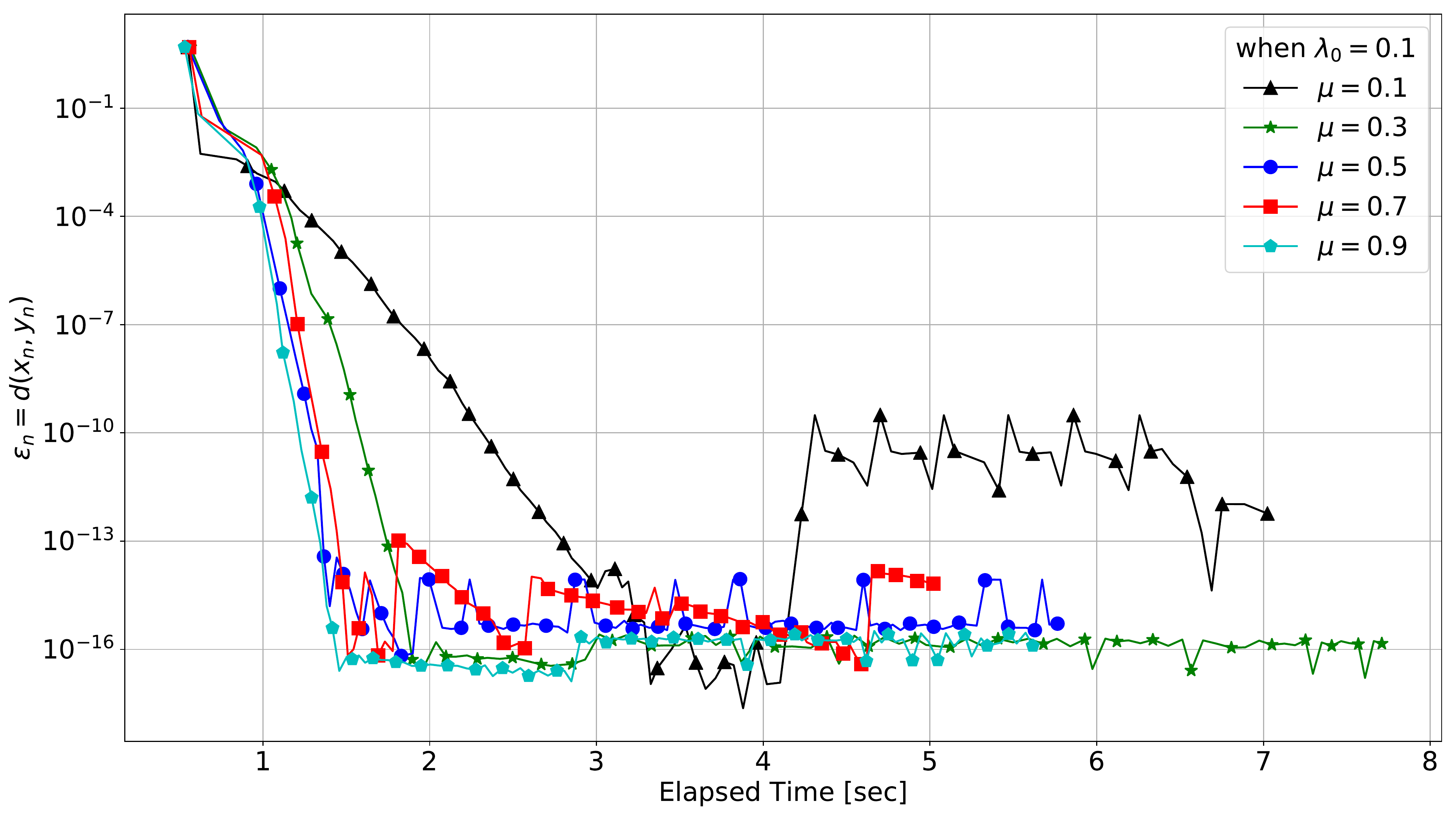}
	\caption{Numerical behavior of $ \{\varepsilon_{n}\} $ in  Algorithm \ref{algorithm 0.1}  with elapsed time}
	\label{fig_time2}
\end{figure}
\end{example}

From Figs. \ref{fig_iter1}--\ref{fig_time2}, we know that the rate of convergence of the sequence $ \{\varepsilon_{n}\} $ generated by Algorithm \ref{algorithm 0.1} is independent of parameters $ \lambda_{0} $ and $ \mu $. In addition, the first $ 20 $ iterations of $ \{\varepsilon_{n}\} $ are very fast, as the number of iterations increases, it seems to become unstable.

\section{Concluding remarks}

In this paper, we investigated the convergence of the new extragradient algorithm for the equilibrium problem involving pseudomonotone and Lipschitz-type bifunctions on Hadamard manifolds. A new stepsize rule allows us not to previously know  the information of the Lipschitz-type constants of bifunctions.
The convergence as well as the $ R $-linear rate of convergence of the algorithm were constructed.  The numerical behaviour of the extragradient algorithm was also discussed.
In order to devise more effective algorithms for problem \eqref{EP} on Hadamard manifolds, we will consider the geometric structure of manifolds in the future. It is of interest to do some numerical experiments and comparisons with other algorithms for practical problems on Riemannian manifolds.

\end{document}